\newcommand{\NN}{\mathbb N}
\newtheorem{theorem}{Theorem}[section]
\newtheorem{lemma}[theorem]{Lemma}
\newtheorem{corollary}[theorem]{Corollary}
\newtheorem{conjecture}[theorem]{Conjecture}
\theoremstyle{definition}
\newtheorem{remark}[theorem]{Remark}
\newtheorem{prob}[theorem]{Problem}
\newcommand{\I}{\mathcal I}
\newcommand{\J}{\mathcal J}
\renewcommand{\S}{\mathfrak S}
\DeclareMathOperator{\des}{des}
\DeclareMathOperator{\dd}{dd}
\DeclareMathOperator{\DES}{Des}
\DeclareMathOperator{\DD}{DD}
\DeclareMathOperator{\maj}{maj}
\title{The Eulerian distribution on involutions is indeed $\gamma$-positive}
\author[Danielle Wang]{Danielle Wang}
\address{Department of Mathematics, Massachusetts Institute of Technology, Cambridge, MA 02139-4307}
\email{diwang@mit.edu}
\begin{document}
\begin{abstract}
	Let $\I_n$ and $\J_n$ denote the set of involutions
	and fixed-point free involutions of
	$\{1, \dots, n\}$, respectively, and let
	$\des(\pi)$ denote the number of descents
	of the permutation $\pi$. We prove a conjecture of Guo and Zeng
	which states that 
	$I_n(t) \coloneqq \sum_{\pi \in \I_n} t^{\des(\pi)}$ 
	is $\gamma$-positive for $n \ge 1$ and 
	$J_{2n}(t) \coloneqq \sum_{\pi \in \J_{2n}} t^{\des(\pi)}$ 
	is $\gamma$-positive for $n \ge 9$.
	We also prove that the number of 
	$(3412, 3421)$-avoiding permutations with $m$
	double descents and $k$ descents is equal to the
	number of separable permutations with $m$ double
	descents and $k$ descents.
	\\\\
	\textbf{Keywords:} Involutions; Descent number;
	$\gamma$-positive; Eulerian polynomial;
	Separable permutations.
\end{abstract}

\maketitle

\section{Introduction}
A polynomial $p(t) = a_rt^r + a_{r+1}t^{r+1}
+ \cdots + a_st^s$ is called \emph{palindromic of
center} $\frac n2$ if $n = r + s$ and
$a_{r+i} = a_{s-i}$ for $0 \le i \le \frac n2 - r$. 
A palindromic polynomial can be written uniquely 
\cite{branden2015unimodality} as
\[
	p(t) = \sum_{k=r}^{\lfloor \frac n2\rfloor}
	\gamma_k t^k(1 + t)^{n - 2k},
\]
and it is called $\gamma$-positive if 
$\gamma_k \ge 0$ for each $k$. The $\gamma$-positivity
of a palindromic polynomial implies unimodality
of its coefficients (i.e., the coefficients $a_i$
satisfy $a_r \le a_{r+1} \le \cdots \le a_{\lfloor n/2\rfloor} \ge a_{\lfloor n/2\rfloor + 1} \ge \cdots \ge a_s$).

Let $\S_n$ be the set of all permutations of
$[n] = \{1, 2, \dots, n\}$.
For $\pi \in \S_n$, the \emph{descent set}
of $\pi$ is
\[
	\DES(\pi) = \{i \in [n-1] : \pi(i) > \pi(i+1)\},
\]
and the \emph{descent number} is
$\des(\pi) = \#\DES(\pi)$.
The \emph{double descent set} is
\[
	\DD(\pi) = \{ i \in [n] : \pi(i-1) > \pi(i) > \pi(i+1) \}
\]
where $\pi(0) = \pi(n+1) = \infty$, and we define
$\dd(\pi) = \#\DD(\pi)$.

Finally, a permutation $\pi$ is said to
\emph{avoid} a permutation $\sigma$ (henceforth called a \emph{pattern}) if $\pi$ does not contain a
subsequence (not necessarily consecutive) with the
same relative order as $\sigma$.
We let $\S_n(\sigma_1, \dots, \sigma_r)$ denote
the set of permutations in $\S_n$ avoiding the patterns
$\sigma_1,\dots, \sigma_r$.

The descent polynomial $A_n(t) = \sum_{\pi \in \S_n} t^{\des(\pi)}$ is called the \emph{Eulerian polynomial},
and we have the following remarkable fact, which
implies that $A_n(t)$ is $\gamma$-positive.

\begin{theorem}[Foata--Sch\"utzenberger \cite{foata2006theorie}]
\label{thm:foata-schutz}
For $n \ge 1$,
\[
	A_n(t) = \sum_{k = 0}^{\lfloor \frac{n-1}{2} \rfloor}
	\gamma_{n,k}t^k(1+t)^{n-1-2k},
\]
where $\gamma_{n,k} = \# \{ \pi \in \S_n :
\dd(\pi) = 0, \des(\pi) = k\}$.
\end{theorem}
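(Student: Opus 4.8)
The plan is to prove the identity by Foata and Strehl's \emph{valley-hopping} action: I will put an action of the group $(\ZZ/2\ZZ)^n$ on $\S_n$ for which every orbit $\mathcal O$ contains exactly one permutation $\sigma$ with $\dd(\sigma)=0$ and satisfies $\sum_{\pi\in\mathcal O}t^{\des(\pi)}=t^{\des(\sigma)}(1+t)^{\,n-1-2\des(\sigma)}$. Summing this identity over all orbits and grouping the representatives $\sigma$ according to $k=\des(\sigma)$ then gives the theorem at once.

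To set up the action, pad each $\pi\in\S_n$ by $\pi(0)=\pi(n+1)=\infty$ and call a position $p\in[n]$ a \emph{peak} if $\pi(p-1)<\pi(p)>\pi(p+1)$, a \emph{valley} if $\pi(p-1)>\pi(p)<\pi(p+1)$, a \emph{double ascent} if $\pi(p-1)<\pi(p)<\pi(p+1)$, and a \emph{double descent} if $\pi(p-1)>\pi(p)>\pi(p+1)$ — so that $\DD(\pi)$ is exactly the set of double-descent positions. For a value $x\in[n]$ define $\varphi_x\colon\S_n\to\S_n$ thus: if $x$ sits at a peak or a valley of $\pi$, put $\varphi_x(\pi)=\pi$; if $x$ sits at a double ascent, write $\pi$ as a word $w_1\,u\,x\,w_2$ with $u$ the maximal block of entries immediately to the left of $x$ that are all smaller than $x$, and put $\varphi_x(\pi)=w_1\,x\,u\,w_2$ (hop $x$ leftward over $u$); and symmetrically, if $x$ sits at a double descent, hop $x$ rightward over the maximal block of smaller entries immediately to its right.

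The key steps, in order. (1) Show $\varphi_x$ is an involution that turns a double ascent at $x$ into a double descent and back, and — the technical heart — that applying $\varphi_x$ changes the peak/valley/double-ascent/double-descent type of no value other than $x$. This makes the $\varphi_x$ pairwise commute, so $(\ZZ/2\ZZ)^n$ acts on $\S_n$; moreover $F(\pi):=\{x:x\text{ sits at a double ascent or descent of }\pi\}$ is then constant on each orbit, only the $\varphi_x$ with $x\in F$ are nontrivial on it, and the orbit has size $2^{|F|}$. (2) Decomposing the padded word — which begins with the descent step $\infty\to\pi(1)$ and ends with the ascent step $\pi(n)\to\infty$ — into its maximal descending and ascending runs yields both $\#\{\text{valleys of }\pi\}=\#\{\text{peaks of }\pi\}+1$ and $\des(\pi)=\#\DD(\pi)+\#\{\text{valleys of }\pi\}-1$; hence $\varphi_x$ raises $\des$ by exactly $1$ when it creates a double descent and lowers it by $1$ when it destroys one. (3) Each orbit contains exactly one $\sigma$ with $\dd(\sigma)=0$, namely the one in which every position of $F(\sigma)$ is a double ascent; writing $k=\des(\sigma)$, step (2) gives $\#\{\text{valleys of }\sigma\}=k+1$ and $\#\{\text{peaks of }\sigma\}=k$, hence $|F(\sigma)|=n-1-2k$; and since the $2^{|F(\sigma)|}$ members of the orbit arise from $\sigma$ by toggling a subset $S\subseteq F(\sigma)$ of double ascents into double descents — which raises $\des$ by $|S|$ — we get $\sum_{\pi\in\mathcal O}t^{\des(\pi)}=\sum_{S\subseteq F(\sigma)}t^{\,k+|S|}=t^k(1+t)^{n-1-2k}$. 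Summing over orbits gives
\[
A_n(t)=\sum_{\substack{\sigma\in\S_n\\ \dd(\sigma)=0}}t^{\des(\sigma)}(1+t)^{\,n-1-2\des(\sigma)}=\sum_{k\ge0}\#\{\sigma\in\S_n:\dd(\sigma)=0,\ \des(\sigma)=k\}\,t^k(1+t)^{\,n-1-2k},
\]
which is exactly the claimed $\gamma$-expansion.

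The main obstacle is step (1). One must first pin down $\varphi_x$ carefully at the two boundary positions $1$ and $n$, where a neighbour of $x$ is the fictitious $\infty$, and then carry out an elementary but genuinely fiddly case analysis of the local word around $x$ and around the two ends of the hopped block in order to verify simultaneously that $\varphi_x^2=\id$, that $\varphi_x\varphi_y=\varphi_y\varphi_x$, and that $\varphi_x$ leaves the type of every value other than $x$ untouched. An alternative that avoids the group action is induction on $n$ using the Eulerian recurrence $A_n(t)=(1+(n-1)t)A_{n-1}(t)+t(1-t)A_{n-1}'(t)$ and checking that the polynomials $\sum_k\#\{\sigma\in\S_n:\dd(\sigma)=0,\ \des(\sigma)=k\}\,t^k(1+t)^{n-1-2k}$ satisfy the same recurrence with the same value at $n=1$; but proving that recurrence combinatorially essentially re-derives the hopping bijection, so I expect step (1) to be the crux in either case.
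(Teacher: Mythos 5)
Your proposal is correct: the valley-hopping (Modified Foata--Strehl) action you set up, with the $\infty$--$\infty$ padding matching the paper's definition of $\DD$, double ascents hopping left and double descents hopping right, is exactly the standard proof of this classical theorem, and your run-decomposition identities $\#\{\text{valleys}\}=\#\{\text{peaks}\}+1$ and $\des(\pi)=\dd(\pi)+\#\{\text{valleys}\}-1$ make the orbit computation $\sum_{\pi\in\mathcal O}t^{\des(\pi)}=t^k(1+t)^{n-1-2k}$ go through. The paper itself gives no proof (it cites Foata--Sch\"utzenberger and points to the MFS action in the literature), so your argument is essentially the approach the paper alludes to, with the remaining work being only the routine local case analysis you already identify in step (1).
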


Similarly, let $\I_n$ be the set of all involutions
in $\S_n$, and let $\J_n$ be the set
of all fixed-point free involutions in $\S_n$.
Define 
\[
	I_n(t) = \sum_{\pi \in \I_n} t^{\des(\pi)}, \quad
	J_n(t) = \sum_{\pi \in \J_n} t^{\des(\pi)}.
\]
Note that $J_n(t) = 0$ for $n$ odd.
Strehl \cite{strehl1981symmetric} first showed that
the polynomials $I_n(t)$ and $J_{2n}(t)$ are palindromic.
Guo and Zeng \cite{guo2006eulerian} proved that 
$I_n(t)$ and $J_{2n}(t)$ are unimodal
and conjectured that they are in fact $\gamma$-positive.
Our first two theorems, which we prove in
Sections \ref{sec:In} and
\ref{sec:Jn}, confirm their conjectures.

\begin{restatable}{thmm}{In}
\label{thm:In}
	For $n \ge 1$, the polynomial $I_n(t)$ is $\gamma$-positive.
\end{restatable}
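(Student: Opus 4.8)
The plan is to derive $\gamma$-positivity of $I_n(t)$ from an explicit recursion. The modified Foata--Strehl valley-hopping action, which yields Theorem~\ref{thm:foata-schutz} combinatorially, is of no direct use here because it does not preserve involutions; instead I would imitate the way one can read off $\gamma$-positivity of the Eulerian polynomial $A_n(t)$ from its recursion. The first step is to produce a recursion for $I_n(t)$. Given $\pi\in\I_{n+1}$, inspect the value $j=\pi(n+1)$ in the last position: if $j=n+1$ then deleting this fixed point yields an element of $\I_n$ with the same descent set (there is no descent at position $n$), and if $j\le n$ then $\{j,n+1\}$ is a $2$-cycle and, after deleting both the value $n+1$ from position $j$ and the trailing value $j$, an order-preserving relabelling produces an element of $\I_{n-1}$ on the position-and-value set $[n+1]\setminus\{j,n+1\}$. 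Tracking how the descent set changes under these deletions, summed over $j\in[n]$ and over the reduced involution, expresses $I_{n+1}(t)$ through $I_n(t)$, $I_{n-1}(t)$ and $I_{n-1}'(t)$ with coefficients polynomial in $t$ and $n$; the derivative occurs because the partner of $n+1$ may sit in any of about $n$ positions, each contributing to $\des$ differently. (A recursion of essentially this shape already appears in Guo--Zeng \cite{guo2006eulerian}, and I would take theirs as the starting point.)

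The second step is to move to the $\gamma$-coordinates. By Strehl \cite{strehl1981symmetric}, $I_n(t)$ is palindromic with center $\tfrac{n-1}{2}$, so write $I_n(t)=\sum_{k\ge 0}\gamma_{n,k}\,t^k(1+t)^{n-1-2k}$; the right-hand side of the recursion, once degrees are matched, is palindromic with center $\tfrac{n}{2}$ and so re-expands in the basis $\{t^k(1+t)^{n-2k}\}_k$. Multiplication of $I_n(t)$ or $I_{n-1}(t)$ by $t$ or by $(1+t)$ carries basis elements to basis elements and hence contributes only nonnegatively; the one dangerous operation is the derivative, since $\tfrac{d}{dt}\big(t^k(1+t)^m\big)=k\,t^{k-1}(1+t)^m+m\,t^k(1+t)^{m-1}$, multiplied by whatever polynomial factor accompanies $I_{n-1}'(t)$, re-expands into the basis with coefficients of both signs. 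Carrying this out yields a recursion of the form
\[
\gamma_{n+1,k}=\bigl(\text{an explicit nonnegative combination of the }\gamma_{n,\bullet}\text{ and }\gamma_{n-1,\bullet}\bigr)-\bigl(\text{an explicit correction term}\bigr).
\]

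The crux, and the step I expect to be hardest, is to show that $\gamma_{n,k}\ge 0$ for all $n$ and $k$, i.e.\ that the correction term is always dominated. I do not expect a bare induction on nonnegativity to close; rather, one should carry a stronger inductive hypothesis --- for instance a one-sided comparison $\gamma_{n,k}\ge c_{n,k}\,\gamma_{n-1,k-1}$, or a log-concavity or monotonicity statement in $k$ --- engineered so that it both bounds the correction term and is preserved by the recursion, with a handful of small values of $n$ checked by hand to start the induction. The fact that the companion statement for $J_{2n}(t)$ holds only once $n\ge 9$ is a warning that such inequalities are tight, so the real work is in the estimates and the base cases; for $I_n(t)$ the inequality should hold for every $n\ge 1$, but isolating the right auxiliary quantity is the obstacle. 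Finally, whatever combinatorial description of $\gamma_{n,k}$ falls out --- plausibly a count of permutations in an explicitly described $\dd=0$ family with $\des=k$ --- is what one would then match, by a statistic-preserving bijection, with the $(3412,3421)$-avoiding and separable permutations of the paper's third result.
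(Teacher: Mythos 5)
Your plan coincides with the strategy the paper actually uses: take the Guo--Zeng recurrence as the starting point, work in $\gamma$-coordinates, note that the recursion has nonnegative coefficients except near the top degree, and close the induction with a strengthened hypothesis of exactly the shape you guess --- a one-sided comparison $\gamma_{n,k}\ge c\,\gamma_{n-1,k-1}$ (the paper uses $a_{n,k}\ge \frac 2n a_{n-1,k-1}$ when $n=2k+1$, $k\ge 4$) together with a computer verification of all $n\le 2000$ as the base case. One piece of the work you describe is already done in the literature: Guo and Zeng's Theorem 4.2 states the recurrence directly for the $\gamma$-coefficients $a_{n,k}$, so the passage from a recursion for $I_n(t)$ involving $I_{n-1}'(t)$ to a recursion for the $\gamma$'s need not be rederived.

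The genuine gap is that you stop precisely where the proof begins. The only cases in which nonnegativity is not immediate are $(n,k)=(2m+1,m)$ and $(2m+2,m)$, where the recurrence carries negative coefficients such as $-(6n-4)a_{2n-1,n-1}$, and the entire content of the theorem is the verification that these negative contributions are dominated. In the paper this occupies several pages: each problematic term is re-expanded through the recurrence two or three more times, selected positive terms are dropped, and the resulting expressions are compared term by term against the positive part, with the auxiliary inequality $a_{2k+1,k}\ge\frac{2}{2k+1}a_{2k,k-1}$ invoked at a specific point to convert an $a_{2n-3,n-2}$ term into an $a_{2n-4,n-3}$ term of the needed size. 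You correctly predict that a ``bare induction on nonnegativity'' will not close and that some such auxiliary inequality is needed, but you do not identify which one, do not verify that it is itself preserved by the recurrence (it must be proved simultaneously in the induction), and do not carry out any of the estimates. Since there is no structural or combinatorial reason given for why the correction term should be dominated --- and indeed the analogous statement for $J_{2n}(t)$ fails for small $n$, as you note --- the nonnegativity cannot be taken on faith; the proposal as written establishes the framework but not the theorem.
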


\begin{restatable}{thmm}{Jn}
\label{thm:Jn}
	For $n \ge 9$, the polynomial $J_{2n}(t)$ is $\gamma$-positive.
\end{restatable}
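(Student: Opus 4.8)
The plan is to prove Theorem~\ref{thm:Jn} along the same lines as Theorem~\ref{thm:In}: set up a recursion for $J_{2n}(t)$ by deleting a well-chosen part of a fixed-point-free involution, repackage the recursion so that $\gamma$-positivity is preserved, and run strong induction — the hypothesis $n\ge 9$ being forced by where that repackaging actually works.

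\textbf{Step 1 (a removal recursion).} For $\pi\in\J_{2n}$ let $p$ be the position of the largest letter, $\pi(p)=2n$; since $\pi$ has no fixed points, $1\le p\le 2n-1$, and $\pi$ is recovered from the $2$-cycle $(p\ \ 2n)$ together with the fixed-point-free involution $\pi'\in\J_{2n-2}$ obtained by deleting positions and values $p,2n$ and order-relabelling. This $\pi'$ ranges freely over $\J_{2n-2}$, and tracking how $\DES$ changes — only around position $p$, where the maximum was removed, and at the last position, which held the letter $p$ — gives, with $[P]=1$ when $P$ holds and $0$ otherwise,
\[
 J_{2n}(t)=t(1+t)\,J_{2n-2}(t)\;+\sum_{p=2}^{2n-2}\ \sum_{\pi'\in\J_{2n-2}}t^{\,\des(\pi')+1+[\,\pi'(2n-2)\ge p\,]-[\,p-1\in\DES(\pi')\,]},
\]
the boundary cases $p=1,\,p=2n-1$ contributing $t^2J_{2n-2}(t)$ and $tJ_{2n-2}(t)$. (For a sanity check, this reads $J_4(t)=t(1+t)J_2(t)+t=t+t^2+t^3$.)

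\textbf{Step 2 (the residual sum).} The term $t(1+t)J_{2n-2}(t)$ is $\gamma$-positive granting the inductive hypothesis, but it is palindromic of center $n+\tfrac12$, whereas $J_{2n}(t)$ has center $n$; so it cannot just be added to the residual sum $R_{2n}(t)$, and the two must be recombined. For a fixed $\pi'$, the inner $p$-sum in $R_{2n}(t)$ depends only on the last letter $v=\pi'(2n-2)$ and on $\DES(\pi')$, and collapses to a short expression with exponents in $\{\des\pi',\des\pi'+1,\des\pi'+2\}$. To exploit this I would introduce refined descent polynomials for the classes $\J_{2m}$ that also record the last letter (or just enough of the descent pattern near the right end to evaluate the brackets $[\pi'(2m)\ge p]$ and $[2m-1\in\DES(\pi')]$), derive a closed system of recursions among them whose coefficients are nonnegative combinations of monomials $t^a(1+t)^b$, and prove by induction that every such refined polynomial — hence $J_{2n}(t)$ — is a nonnegative combination of the basis $\{t^k(1+t)^{2n-2k}\}$. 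To align this with the double-descent statistic featured in the abstract, the natural target (paralleling Theorem~\ref{thm:foata-schutz}) is that the $\gamma$-coefficient of $t^k(1+t)^{2n-2k}$ in $J_{2n}(t)$ counts the $\pi\in\J_{2n}$ with $\dd(\pi)=0$ and $\des(\pi)=k$, and the refined recursion should be organized to transport this description. I expect the auxiliary generating polynomial left over at the end to be (the $\dd=0$ part of) the descent distribution on a class of pattern-avoiding permutations, and the bijective identification of $(3412,3421)$-avoiding permutations with separable permutations by the joint $(\dd,\des)$-statistic — the descent distribution of separable permutations being $\gamma$-positive, e.g.\ by a valley-hopping (modified Foata--Strehl) argument — to be precisely the ingredient that settles this auxiliary claim.

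\textbf{Step 3 (base cases and the main obstacle).} The recursion must be seeded, and the recombination in Step 2 only produces nonnegative $\gamma$-coefficients once $n$ is large enough that the positive main term dominates the negative contributions carried by $R_{2n}(t)$; since $J_4(t)=t+t^2+t^3$ is already not $\gamma$-positive (its $\gamma$-vector is $(1,-1)$), a handful of small cases are genuine exceptions, and one verifies $J_{2n}(t)$ directly for $n\le 8$. The main obstacle is this inductive step: since neither $R_{2n}(t)$ nor $t(1+t)J_{2n-2}(t)$ is $\gamma$-positive \emph{with the correct center}, one cannot add $\gamma$-positive pieces naively, and the refined recursion must be tracked carefully enough to exhibit the cancellation for $n\ge 9$ — with the further technical nuisance of the boundary positions $p\in\{1,2,2n-2,2n-1\}$, where the descent comparison in Step~1 degenerates and needs separate handling.
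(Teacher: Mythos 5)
There is a genuine gap: your argument stops exactly where the proof has to begin. Step 1 (the removal recursion obtained by deleting the $2$-cycle through $2n$) is a plausible and correctly sanity-checked decomposition, but Steps 2 and 3 are a plan rather than a proof — you say you \emph{would} introduce refined descent polynomials, \emph{derive} a closed system of recursions with nonnegative $t^a(1+t)^b$ coefficients, and \emph{expect} the leftover piece to be identified with a pattern-avoiding class, without carrying out any of it. Since you yourself observe that the two pieces $t(1+t)J_{2n-2}(t)$ and $R_{2n}(t)$ have mismatched palindromic centers and that negative $\gamma$-contributions genuinely occur (indeed $b_{4,2}=-1$), the entire content of the theorem is the cancellation argument for $n\ge 9$, and that argument is absent. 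Moreover, the "natural target" you propose — that the $\gamma$-coefficient of $t^k(1+t)^{2n-2k}$ counts $\pi\in\J_{2n}$ with $\dd(\pi)=0$ and $\des(\pi)=k$ — cannot hold uniformly (a counting interpretation cannot be negative, yet $J_4$ has a negative $\gamma$-coefficient), and finding such an interpretation is essentially the open problem the paper records in its concluding section; likewise, the hoped-for role of the separable/$(3412,3421)$ equivalence is speculation with no connection to fixed-point-free involutions.

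For contrast, the paper does not build a new recursion at all: it takes as input the known Guo–Zeng recurrence for the $\gamma$-coefficients $b_{2n,k}$ themselves, in which all coefficients are nonnegative except in the single extremal case $k=n$. The proof then runs induction on $n$ with the \emph{strengthened} hypothesis $b_{2n,n}\ge b_{2n-2,n-1}$ (for $n\ge 11$), verifies all cases with $2n\le 2000$ by computer, and for $2n>2000$ disposes of the case $k=n$ by an explicit chain of inequalities obtained by repeatedly re-expanding terms via the recurrence and bounding the rational coefficients. If you want to salvage your approach, you would need to either (a) actually close the refined system in Step 2 and exhibit nonnegativity of its $\gamma$-expansion — which is likely to reduce to inequalities of the same flavor as the paper's — or (b) switch to the Guo–Zeng recurrence and supply the quantitative estimates; either way, the inductive step must be made explicit before this constitutes a proof.
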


Theorem \ref{thm:foata-schutz} and many variations
of it have been proved by many methods, see for
example \cite{lin2016proof,shin2012symmetric}.
One of these methods uses the \emph{Modified Foata--Strehl (MFS) action} 
on $\S_n$
\cite{branden2008actions,lin2015gamma,postnikov2008faces}.
In fact it follows from \cite[Theorem 3.1]{branden2008actions} that the same property holds
for all subsets of $\S_n$ which are invariant under
the MFS action. The $(2413, 3142)$-avoiding permutations
are the \emph{separable permutations}, which
are permutations that can be built from the 
trivial permutation through \emph{direct sums} 
and \emph{skew sums} \cite[Theorem 2.2.36]{kitaev2011patterns}.
These are not invariant under the MFS action.
However, in 2017, Fu, Lin, and Zeng \cite{fu2018two},
using a bijection with di-sk trees,
and Lin \cite{lin2017gamma}, using an algebraic approach,
proved that the separable permutations also satisfy 
the following theorem.

\begin{theorem}[{\cite[Theorem 1.1]{fu2018two}}]
	For $n \ge 1$,
	\[
		\sum_{\pi \in \S_n(2413, 3142)} t^{\des(\pi)} = \sum_{k = 0}^{\lfloor \frac{n-1}{2}\rfloor} \gamma_{n,k}^S t^k(1+t)^{n-1-2k},
	\]
	where $\gamma_{n,k}^S = \#\{ \pi \in \S_n(2413, 3142) : \dd(\pi) = 0, \des(\pi) = k\}$.
\end{theorem}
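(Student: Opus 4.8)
The plan is to run, for the separable class, the valley-hopping argument behind Theorem~\ref{thm:foata-schutz}. Recall that that argument rests on the identity $\des(\pi)=p(\pi)+\dd(\pi)$, valid for every $\pi$, where $p(\pi)$ is the number of peaks of $\pi$ (using the convention $\pi(0)=\pi(n+1)=\infty$): the Modified Foata--Strehl action on $\S_n$ toggles each \emph{flat} position --- double ascent or double descent --- independently while fixing every peak and valley, so its orbits are Boolean lattices of rank $c=n-1-2p$ on which $p$ is constant, each orbit $\mathcal O$ satisfies $\sum_{\pi\in\mathcal O}t^{\des(\pi)}=t^{p}(1+t)^{\,n-1-2p}$, and the unique member of $\mathcal O$ with $\dd=0$ has $\des=p$. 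The obstruction to copying this verbatim is that the MFS hop slides a letter across an arch of the plot, which can create a $2413$ or a $3142$ pattern; the remedy is to transport the action to the tree model of separable permutations.

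Concretely, I would first encode each $\pi\in\S_n(2413,3142)$ by its canonical decomposition tree $T(\pi)$: a plane binary tree with $n$ leaves whose internal nodes are labelled $\oplus$ or $\ominus$, subject to the right-associativity condition that no internal node carries the same label as its left child, where a leaf stands for the one-element permutation and an $\oplus$-node (resp.\ an $\ominus$-node) for the direct sum (resp.\ the skew sum) of the permutations coded by its two subtrees; existence and uniqueness of $T(\pi)$ is \cite[Theorem~2.2.36]{kitaev2011patterns}, and this is the di-sk tree model. I would then translate the statistics into tree data. From $\des(\sigma\oplus\tau)=\des(\sigma)+\des(\tau)$ and $\des(\sigma\ominus\tau)=\des(\sigma)+\des(\tau)+1$, $\des(\pi)$ equals the number of $\ominus$-nodes of $T(\pi)$. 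Writing $\ell_1,\dots,\ell_n$ for the leaves of $T(\pi)$ in left-to-right order, one has $\pi(i)<\pi(i+1)$ or $\pi(i)>\pi(i+1)$ according as the lowest common ancestor $\mathrm{lca}(\ell_i,\ell_{i+1})$ is labelled $\oplus$ or $\ominus$; hence the type of position $i$ --- peak, valley, double ascent, or double descent, with the convention $\pi(0)=\pi(n+1)=\infty$ handling $i=1$ and $i=n$ --- is determined by the labels of $\mathrm{lca}(\ell_{i-1},\ell_i)$ and $\mathrm{lca}(\ell_i,\ell_{i+1})$. In particular I would extract, at each internal node, a local rule identifying the togglable positions it governs.

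With this in hand, for each togglable position, witnessed at a node $v$, I would define an involution $\psi_v$ on canonical decomposition trees that flips the label of $v$ and transplants the appropriate hanging subtree from one side of $v$ to the other, engineered so that (i) the output is again a canonical decomposition tree, hence the move stays inside $\S_n(2413,3142)$; (ii) it switches the governed position between double ascent and double descent and disturbs no peak and no valley of $\pi$; and (iii) distinct $\psi_v$ commute, so that together they generate a group $\ZZ_2^{\,c(\pi)}$ with $c(\pi)=n-1-2\,p(\pi)$. Granting this, each orbit $\mathcal O$ is a Boolean lattice of rank $c$ on which $p$ is a constant $k$; by $\des=p+\dd$, the value of $\des$ runs over $k,k+1,\dots,k+c$ with binomial multiplicities, so $\sum_{\pi\in\mathcal O}t^{\des(\pi)}=t^{k}(1+t)^{\,n-1-2k}$, and the unique $\dd=0$ member of $\mathcal O$ lies in $\S_n(2413,3142)$ and has $\des=k$. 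Summing over orbits yields the stated expansion with $\gamma_{n,k}^S=\#\{\pi\in\S_n(2413,3142):\dd(\pi)=0,\ \des(\pi)=k\}$.

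I expect the real work --- and the main obstacle --- to be the construction and verification of the $\psi_v$: the classical hop is transparent on the line but opaque on the tree, whereas the natural tree surgery (swap a node's label, move a subtree) must be checked to preserve the alternation condition, to produce exactly the intended double-ascent$\leftrightarrow$double-descent switch and nothing else among peaks and valleys, and to commute across nodes; moreover, the boundary convention $\pi(0)=\pi(n+1)=\infty$ forces the leftmost and rightmost leaves to be treated apart, since there a double descent at position $1$ is just an ordinary descent. A more pedestrian alternative would be to extract from the $\oplus/\ominus$-decomposition a closed system of functional equations for the generating function refined by $(\des,\dd)$ together with the two auxiliary bits ``position $1$ is a descent'' and ``position $n-1$ is a descent'' (on which the junction double descents depend), and then to deduce $\gamma$-positivity from the system; this is more mechanical but obscures the combinatorial meaning of the $\gamma$-coefficients and is less likely to transfer to the $(3412,3421)$-avoiding permutations treated in the sequel.
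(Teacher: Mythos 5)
First, note that the paper does not prove this statement at all: it is quoted as background from Fu--Lin--Zeng \cite{fu2018two}, whose proof goes through a bijection with di-sk trees, with an alternative algebraic proof by Lin \cite{lin2017gamma} via functional equations (the same machinery, based on Stankova's block decomposition, that the present paper reuses in Section \ref{sec:dddes} to get the equation for $S_1$). So your proposal has to stand on its own, and as written it has a genuine gap: the entire content of the argument is the existence of the commuting involutions $\psi_v$ --- tree surgeries that toggle a double ascent into a double descent, preserve all peaks and valleys, keep you inside the canonical decomposition trees (hence inside $\S_n(2413,3142)$), and commute so that orbits are Boolean --- and you explicitly defer their construction. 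This is not a routine verification you can wave at. The paper itself records that the separable permutations are \emph{not} invariant under the MFS action, and its concluding remarks point out (citing \cite[Remark 3.9]{fu2018two}) that a group action on $\S_n(2413,3142)$ whose orbits each contain exactly one double-descent-free element is precisely what is \emph{missing} and would only follow from a yet-to-be-found combinatorial proof of Theorem \ref{thm:dddes}. In other words, the step you label ``the real work'' is essentially an open problem, not a technical detail.

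A concrete reason the naive surgery is delicate: internal nodes of $T(\pi)$ correspond bijectively to junctions $i$ (the node is the lca of $\ell_i,\ell_{i+1}$), and flipping one node's label toggles ascent vs.\ descent at that single junction; but valley hopping is not a single-junction toggle --- a letter's type (peak/valley/double ascent/double descent) depends on \emph{two} adjacent junctions, and a single flip turns a double ascent into a valley or a peak and simultaneously disturbs the neighboring letter. So $\psi_v$ must coordinate a label flip with a nontrivial subtree transplant, and verifying (i)--(iii) --- canonical form preserved, exactly one letter's type switched, commutativity --- is exactly where the known approaches abandon the direct action on separable permutations in favor of di-sk trees or generating functions. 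Your ``pedestrian alternative'' (a closed system of equations refined by $\des$ and $\dd$, plus boundary statistics) is in fact the viable route and is close to what Lin and the present paper do, but you have not derived the system nor shown how the $\gamma$-expansion with the stated combinatorial interpretation of $\gamma_{n,k}^S$ falls out of it; until one of the two routes is actually executed, the statement remains unproved by your argument.
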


Two sets of patterns $\Pi_1$ and $\Pi_2$ are \emph{$\des$-Wilf} equivalent if 
\[
	\sum_{\pi \in \S_n(\Pi_1)} t^{\des(\pi)} = 
	\sum_{\pi \in \S_n(\Pi_2)} t^{\des(\pi)},
\]
and are \emph{$\DES$-Wilf} equivalent if
\[
	\sum_{\pi \in \S_n(\Pi_1)} \prod_{i \in \DES(\pi)}t_i
	= \sum_{\pi \in \S_n(\Pi_2)} \prod_{i \in \DES(\pi)}t_i.
\]
We also say that the permutation classes
$\S_n(\Pi_1)$ and $\S_n(\Pi_2)$ are $\des$-Wilf 
or $\DES$-Wilf equivalent.

In 2018, Lin and Kim \cite[Theorem 5.1]{lin2018sextuple}
determined all permutation classes avoiding two patterns of length $4$
which are $\des$-Wilf equivalent to the separable 
permutations, all of which are $\DES$-Wilf equivalent
to each other but not to the separable permutations.

One such class is the $(3412, 3421)$-avoiding
permutations, which is invariant under the MFS action. 
A byproduct
of this is that the number of $(3412, 3421)$-avoiding
permutations with no double descents and $k$ descents
is also equal to $\gamma_{n,k}^S$.
In Section \ref{sec:dddes}, we prove the following more general fact.

\begin{restatable}{thmm}{dddes}
\label{thm:dddes}
	For $n \ge 1$,
	\[
		\sum_{\pi \in \S_n(3412, 3421)} x^{\des(\pi)}y^{\dd(\pi)} = \sum_{\pi \in \S_n(2413, 3142)} x^{\des(\pi)}y^{\dd(\pi)}.
	\]
\end{restatable}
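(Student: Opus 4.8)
The plan is to establish the joint equidistribution of $(\des, \dd)$ over $\S_n(3412,3421)$ and $\S_n(2413,3142)$ by exhibiting a common recursive structure --- ideally a statistic-preserving bijection with a single combinatorial model, such as a family of labeled trees. First I would recall the decomposition of separable permutations: every $\pi \in \S_n(2413,3142)$ with $n \ge 2$ is either a direct sum $\pi_1 \oplus \pi_2$ or a skew sum $\pi_1 \ominus \pi_2$ of shorter separable permutations, and one may insist (to get uniqueness) that in the $\oplus$ case the first summand is $\ominus$-indecomposable and in the $\ominus$ case the first summand is $\oplus$-indecomposable. This gives the di-sk tree encoding used in \cite{fu2018two}. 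I would then track how $\des$ and $\dd$ behave under $\oplus$ and $\ominus$: for $\pi = \pi_1 \oplus \pi_2$ one has $\des(\pi) = \des(\pi_1) + \des(\pi_2)$, while the double descent count changes only near the junction, depending on whether $\pi_1$ ends with a descent and whether $\pi_2$ begins with one (and symmetrically, with the roles of ascents/descents swapped, for $\ominus$). The upshot is a recursion for the generating function $P_n(x,y) = \sum_{\pi \in \S_n(2413,3142)} x^{\des(\pi)} y^{\dd(\pi)}$ that also records the ``boundary type'' of a permutation (whether it starts/ends with an ascent or a descent), i.e., a refined four-variable generating function.

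Next I would carry out the analogous analysis for $Q_n(x,y) = \sum_{\pi \in \S_n(3412,3421)} x^{\des(\pi)} y^{\dd(\pi)}$. The class $\S_n(3412,3421)$ is also closed under a substitution-type decomposition, but its structure is less symmetric than that of the separable permutations: avoiding $3412$ and $3421$ simultaneously forbids, roughly speaking, a large descending ``block'' sitting above two later entries, so these permutations admit a decomposition in which skew sums are essentially unrestricted but direct sums are constrained --- or vice versa after taking a suitable symmetry (reverse, complement, or inverse). The key point I would aim to prove is that, after applying the correct symmetry of the square (which preserves $\des$ up to the involution $x \mapsto$ ``$n-1-\des$'' and preserves $\dd$, or more precisely fixes the pair we care about), the refined generating function for $\S_n(3412,3421)$ satisfies \emph{exactly the same} system of recursions, with the same boundary bookkeeping, as that for the separable permutations. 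Then the two joint distributions agree by induction on $n$, the base case $n = 1$ being trivial.

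The main obstacle I anticipate is getting the boundary-type bookkeeping to match up on the nose. Both $\dd$ and the junction behavior under $\oplus$/$\ominus$ are sensitive to the first and last \emph{two} letters of a permutation (because a double descent at the boundary involves the phantom $\infty$ at positions $0$ and $n+1$), so the naive two-way split (starts ascending vs.\ descending) may not carry enough information, and I may need to refine to track, say, the number of double descents contributed at each end separately, or to condition on the indecomposability type of the leading summand as in the di-sk tree setup. The structural decomposition of $\S_n(3412,3421)$ will need to be pinned down carefully from the pattern-avoidance conditions --- this is where an explicit case analysis on the position of the maximum (or of the value $1$) is likely unavoidable --- and then matched against the separable recursion term by term. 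Once the recursions coincide, specializing $y = 1$ recovers the known $\des$-Wilf equivalence of \cite{lin2018sextuple}, and setting $y = 0$ recovers the equality of the $\gamma$-coefficients $\gamma_{n,k}^S$, so the result is a common refinement of both.
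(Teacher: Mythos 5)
Your overall strategy---decompose both classes, track $(\des,\dd)$ together with some boundary refinement, and compare the resulting generating-function recursions---is in the same spirit as the paper's proof, but the step you leave as a hope is exactly where the content lies, and as stated it is unlikely to go through. The paper does \emph{not} show that the two classes satisfy the same refined system ``with the same boundary bookkeeping.'' For the separable permutations it uses Stankova's block decomposition to get a system in three series $S_1,F_1,R_1$ (refining $\dd$ by the variants $\dd_0$ and $\dd_\infty$, where the phantom boundary values $0$ or $\infty$ are changed), while for $\S_n(3412,3421)$ it uses a completely different decomposition---by the position of the letter $n$, writing $\pi$ as $\pi_1 n$ or as a glued product $\pi_1\ast\pi_2$---yielding a two-series system in $S_2$ and an auxiliary series $T_2$ that tracks the primed statistics $\des',\dd'$ (descents and double descents away from position $n-1$). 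These systems look nothing alike; the equidistribution only appears after eliminating the auxiliary series: both $S_1$ and $S_2$ are shown to satisfy the same cubic functional equation $S = xS^3 + xzS^2 + (z+xyz)S + z$, and equality follows from that. So your plan to match the recursions ``term by term'' presupposes a structural parallelism that is not known to exist---indeed the paper explicitly leaves a bijective/combinatorial explanation as an open problem, and the fact that the two classes are not $\DES$-Wilf equivalent is a warning that their recursive structures genuinely differ.

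Two further concrete problems. First, your appeal to a ``symmetry of the square'' to normalize the $(3412,3421)$ class does not work as described: the set $\{3412,3421\}$ is not closed under reverse, complement, inverse, or reverse-complement (e.g.\ reverse-complement sends $3421$ to $4312$), and in any case reverse and complement exchange double descents with double ascents and send $\des$ to $n-1-\des$, so they do not preserve the joint statistic you need. Second, you never actually derive the decomposition of $\S_n(3412,3421)$; the case analysis on the position of $n$ (not of the maximum of a ``block'' or of the value $1$) with the precise gluing rule and its effect on $\des,\dd,\des',\dd'$---including the boundary exceptions such as $\dd(1\ast\pi_2)=\dd(\pi_2)+1$---is the technical heart of the argument, and without it there is no recursion to compare. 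To repair the proposal, derive the two systems separately (accepting that they are different), then eliminate the auxiliary series on each side and check that the resulting algebraic equations for the two $(\des,\dd)$ generating functions coincide, together with enough initial terms to pin down the common solution.
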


\section{Proof of the $\gamma$-positivity of $I_n(t)$}
\label{sec:In}
In this section we prove Theorem \ref{thm:In},
restated below for clarity. 
Let the $\gamma$-expansion of $I_n(t)$ be
\[
	I_n(t) = \sum_{k=0}^{\lfloor \frac{n-1}{2}\rfloor} 
	a_{n,k} t^k(1+t)^{n-2k-1}.
\]
We have the following recurrence relation for the 
coefficients $a_{n,k}$.
\begin{theorem}[{\cite[Theorem 4.2]{guo2006eulerian}}]
\label{thm:Inrecur}
For $n \ge 3$ and $k \ge 0$,
\begin{align*}
	na_{n,k} = &(k+1)a_{n-1,k} + (2n-4k)a_{n-1,k-1}
	+ [k(k+2) + n-1]a_{n-2,k} \\
	&+ [(k-1)(4n-8k-14) + 2n-8]a_{n-2,k-1} \\
	&+ 4(n-2k)(n-2k+1)a_{n-2,k-2},
\end{align*}
where $a_{n,k} = 0$ if $k < 0$ or $k > (n-1)/2$.
\end{theorem}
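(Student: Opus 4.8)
The plan is to derive the recurrence in two stages. First I would establish a linear recurrence expressing $n\,I_n(t)$ through $I_{n-1}(t)$ and $I_{n-2}(t)$ and their first two derivatives; then I would substitute the $\gamma$-expansion $I_m(t)=\sum_j a_{m,j}\,t^j(1+t)^{m-1-2j}$ for $m=n,n-1,n-2$ into that recurrence and compare coefficients. One should not expect a shortcut through a combinatorial description of the $a_{n,k}$: since $\I_n$ is not stable under the Modified Foata--Strehl action, $a_{n,k}$ is not the number of double-descent-free involutions with $k$ descents, so the recurrence has to be read off from the analytic behaviour of the polynomials $I_n(t)$ themselves.

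For the first stage I would delete the largest letter $n$ from $\pi\in\I_n$. If $n$ is a fixed point, the remaining word is an arbitrary element of $\I_{n-1}$ with the same descent number, contributing $I_{n-1}(t)$. If $n$ lies in a $2$-cycle $(i,n)$, deleting both entries and standardizing gives $\tau\in\I_{n-2}$, and a local inspection of the word of $\pi$ yields
\[
	\des(\pi)=\des(\tau)+1-[\,i-1\in\DES(\tau)\,]+[\,\tau(n-2)\ge i\,],
\]
with positions outside the admissible range read as absent. Summing $t^{\des(\pi)}$ over the $n-1$ values of $i$ and over $\tau\in\I_{n-2}$ turns the two indicators into $I_{n-2}(t)$, $t\,I_{n-2}'(t)$, and the refined sums $\sum_\tau\tau(n-2)\,t^{\des(\tau)}$ and $\sum_\tau|\DES(\tau)\cap[\,\tau(n-2)-1\,]|\,t^{\des(\tau)}$. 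These last-letter statistics satisfy recurrences of their own; eliminating them — or, more transparently, working instead from a generating-function identity such as
\[
	\frac{I_n(t)}{(1-t)^{n+1}}=\sum_{m\ge1}t^{m-1}\,[u^n]\,\frac{1}{(1-u)^m(1-u^2)^{\binom m2}},
\]
which follows from $I_n(t)=\sum_{\lambda\vdash n}\sum_{T\in\mathrm{SYT}(\lambda)}t^{\des(T)}$ (RSK) together with the theory of $P$-partitions — should produce a recurrence of the shape $n\,I_n(t)=\mathcal L_1[I_{n-1}](t)+\mathcal L_2[I_{n-2}](t)$, where $\mathcal L_1$ and $\mathcal L_2$ are linear differential operators in $t$ of orders one and two with coefficients that are polynomials in $t$ of bounded degree and affine in $n$.

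For the second stage I would differentiate the $\gamma$-expansion: $I_m^{(r)}(t)$ is a combination of terms $t^a(1+t)^{m-1-2\ell}$ with coefficients polynomial of degree $r$ in $\ell$ and in $m$, and it is the factor $(m-1-2\ell)(m-2-2\ell)$ from the second derivative that will generate the quadratic-in-$k$ and quadratic-in-$(n-2k)$ coefficients on the right-hand side. Then, using only the reduction identity $t^a(1+t)^b=t^a(1+t)^{b-2}+2t^{a+1}(1+t)^{b-2}+t^{a+2}(1+t)^{b-2}$, one rewrites every term of $\mathcal L_1[I_{n-1}]$ and $\mathcal L_2[I_{n-2}]$ in the basis $\{t^k(1+t)^{n-1-2k}\}_{0\le k\le(n-1)/2}$. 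Since $I_n(t)$ is palindromic of center $\tfrac{n-1}{2}$ (Strehl), this expansion is unique, so matching the coefficient of $t^k(1+t)^{n-1-2k}$ yields a single linear relation among $a_{n,k}$, $a_{n-1,k}$, $a_{n-1,k-1}$, $a_{n-2,k}$, $a_{n-2,k-1}$, $a_{n-2,k-2}$; collecting the $n$- and $k$-dependence produces the five displayed coefficients. The convention $a_{n,k}=0$ for $k<0$ or $k>(n-1)/2$ is forced by the vanishing of the corresponding basis elements, and checking $n=3,4,5$ directly confirms that the relation holds from $n=3$ onward.

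I expect the first stage to be the real obstacle. The fixed-point case is immediate, but the $2$-cycle case is genuinely sensitive to the last letter of the involution word, so one must either carry the auxiliary last-letter polynomials along and close the resulting system, or pass to the generating-function picture where that dependence becomes manageable. Once the operator recurrence for $I_n(t)$ is in hand, the rest — differentiating the $\gamma$-expansion, carrying out the $(1+t)$-reduction, and reading off coefficients — is mechanical; its one hazard is arithmetic slips in propagating $n$ and $k$ through the second-order terms, which is why the final coefficients, especially $(k-1)(4n-8k-14)+2n-8$, should be cross-checked numerically for small $n$.
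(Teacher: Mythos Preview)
The paper does not prove this statement: it is quoted as \cite[Theorem~4.2]{guo2006eulerian} and used without proof as the engine driving the induction in Theorem~\ref{thm:In}. There is therefore no proof in the present paper to compare your proposal against.

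That said, your two-stage outline---derive a linear differential recurrence for $nI_n(t)$ by deleting the letter $n$ (splitting on whether $n$ is a fixed point or lies in a $2$-cycle), then substitute the $\gamma$-expansion and extract coefficients in the palindromic basis $\{t^k(1+t)^{n-1-2k}\}$---is precisely the route Guo and Zeng take in the cited reference. Your descent-change formula $\des(\pi)=\des(\tau)+1-[\,i-1\in\DES(\tau)\,]+[\,\tau(n-2)\ge i\,]$ in the $2$-cycle case is correct, and your diagnosis that the last-letter dependence is where the real work sits is accurate. Guo and Zeng close the system by carrying an auxiliary refinement of $I_{n}(t)$ rather than by the RSK/$P$-partition generating-function detour you offer as an alternative, but either path leads to the same second-order operator recurrence, after which the coefficient extraction is, as you say, mechanical.
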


\In*
\begin{proof}
We will prove by induction on $n$ the slightly 
stronger claim that
$a_{n,k} \ge 0$ for $n \ge 1$, $k \ge 0$,
and $a_{n,k} \ge \frac 2n a_{n-1,k-1}$ if $n = 2k + 1$
and $k \ge 4$. Assume the claim is true whenever the 
first index is less than $m$.
We want to prove the claim for all $a_{m,k}$.
If $m \le 2000$, we can check the claim
directly (this has been done by the author using Sage). Thus, we may assume that $m > 2000$.
If $m \ge 2k + 3$, then all the coefficients
in the recursion are nonnegative, so we are done
by induction. Thus, assume that
$(m, k) = (2n+1, n)$ or $(2n+2, n)$ with $n \ge 1000$.
\\\\
\underline{Case 1:} $(m,k) = (2n+2, n)$.
We wish to show that $a_{2n+2,n} \ge 0$. We apply
the recurrence in Theorem \ref{thm:Inrecur}, noting that
$a_{2n, n} = 0$ since $n > (2n-1)/2$, to get
\begin{align*}
	(2n+2)a_{2n+2, n} &= (n+1)a_{2n+1,n} + 4a_{2n+1,n-1}
	+ 24a_{2n,n-2} - (2n-2)a_{2n,n-1} \\
	&\ge 4a_{2n+1,n-1} + 24a_{2n,n-2} - 2na_{2n,n-1} \\
	&\ge 4a_{2n+1, n-1} + 24a_{2n,n-2} - na_{2n-1, n-1}
	- 4a_{2n-1, n-2} \\
	& \quad - 24a_{2n-2, n-3}, \tag{$\dagger$}
\end{align*}
where the last inequality comes from applying the
recurrence once again to obtain
\[
	2n a_{2n,n-1} \le na_{2n-1, n-1} + 4a_{2n-1, n-2}
	+ 24a_{2n-2, n-3}.
\]
Note that since $2n+1 \ge 2(n-1) + 3$, when we
apply the recurrence relation for $a_{2n+1, n-1}$
all terms in the sum are positive. We drop all terms but the
$a_{2n-1, n-1}$ term and multiply by $4/(2n+1)$ to
get
\begin{align*}
	4a_{2n+1,n-1} &\ge \frac{4}{2n+1} \left[ (n-1)(n+1) + 2n \right] a_{2n-1,n-1} \ge na_{2n-1, n-1}.
\end{align*}
Similarly, since $2n \ge 2(n-2) + 3$, when we
use the recursion to calculate $a_{2n, n-2}$, we
can drop all terms but the $a_{2n-1, n-2}$ term,
which after multiplying by $12/(2n)$ gives
\begin{align*}
	12 a_{2n, n-2} \ge \frac{12}{2n}[(n-1) a_{2n-1, n-2}]
	\ge 4 a_{2n-1, n-2}.
\end{align*}
Alternatively, we could have dropped all but the
$a_{2n-2, n-3}$ term to get
\begin{align*}
	12a_{2n, n-2} &\ge \frac{12}{2n}[ (n-3)\cdot 2 + 4n - 8]a_{2n-2, n-3} \ge 24a_{n-2, n-3}.
\end{align*}
Plugging the previous three inequalities into
$(\dagger)$ proves that $a_{2n+2, n} \ge 0$, as desired.
\\\\
\underline{Case 2:} $(m,k) = (2n+1, n)$.
We want to show $(2n+1)a_{2n+1,n} \ge 2a_{2n, n-1}$.
By the recurrence relation, we have
\begin{align*}
	(2n+1)a_{2n+1,n} = 2a_{2n, n-1} + 8a_{2n-1, n-2}
	- (6n-4)a_{2n-1, n-1}.
\end{align*}
Thus, it suffices to show that 
$8a_{2n-1,n-2} - (6n-3)a_{2n-1,n-1} \ge 0$.
Note that
\begin{align*}
	8a_{2n-1,n-2} - &(6n-3)a_{2n-1,n-1} \\ &\ge
	8a_{2n-1,n-2} - 6a_{2n-2, n-2} - 24a_{2n-3, n-3}
	\tag{$\ast$}
\end{align*}
because, by applying the same recurrence relation for
$a_{2n-1, n-1}$ and dropping the 
$-(6n-10)a_{2n-3, n-2}$ term, which is negative, we see that 
\begin{align*}
	(6n-3)a_{2n-1,n-1} &= 3(2n-1)a_{2n-1, n-1}\\
	&\le 3(2a_{2n-2, n-2} + 8a_{2n-3, n-3})\\
	&= 6a_{2n-2, n-2} + 24a_{2n-3, n-3}.
\end{align*}
Multiplying the right hand side of $(\ast)$ by
$2n-1$ and applying the recurrence relation
for $a_{2n-1, n-2}$ we get
\begin{align*}
	(2n-1) \cdot (\ast) &= 8(2n-1)a_{2n-1, n-2} - (12n-6)a_{2n-2, n-2}
	- (48n - 24)a_{2n-3, n-3} \\
	&= 8[(n-1)a_{2n-2, n-2} + 6a_{2n-2, n-3} + (n^2 - 2)
	a_{2n-3, n-2}\\&\quad + (2n-4) a_{2n-3, n-3} + 48a_{2n-3, n-4}
	] \\&\quad - (12n - 6)a_{2n-2, n-2} - (48n - 24)a_{2n-3, n-3} \\
	&= 48a_{2n-2,n-3} + (8n^2-16)a_{2n-3,n-2}
	+ 384a_{2n-3,n-4}\\&\quad - (4n+2)a_{2n-2,n-2} - (32n+8)a_{2n-3,n-3}. \tag{$\ast\ast$}
\end{align*}
Now, since $2n-2\ge 2(n-3) + 3$, we use the recursion
to calculate $a_{2n-2, n-3}$, drop some terms,
and multiply by $48/(2n-2)$ to get
\begin{align*}
	48 a_{2n-2,n-3} \ge \frac{48}{2n-2}
	[(n-4)\cdot 2 + 4n-12] a_{2n-4,n-4}
	\ge 120 a_{2n-4, n-4}.
\end{align*}
Now we apply the recursion for $a_{2n-2, n-2}$ and
multiply by $(4n+2)/(2n-2)$, which is less than $5$, to obtain
\begin{align*}
	(4n + 2)a_{2n-2, n-2}& = \frac{4n+2}{2n-2} 
	[(n-1)a_{2n-3,n-2} + 4a_{2n-3,n-3} \\ &\quad
	+ 24a_{2n-4, n-4} - (2n-6)a_{2n-4, n-3}] \\
	&< (5n-5)a_{2n-3,n-2} + 20a_{2n-3, n-3} + 120a_{2n-4, n-4} \\
	&\quad- (10n-30)a_{2n-4,n-3}.
\end{align*}
We substitute the above two bounds on
$48a_{2n-2, n-3}$ and $(4n+2)a_{2n-2, n-2}$ for
the corresponding terms in $(\ast\ast)$ to get
\begin{align*}
	(\ast\ast) &\ge 120 a_{2n-4, n-4} + (8n^2-16)a_{2n-3,n-2}
	+ 384a_{2n-3,n-4}\\
	&\quad - (5n-5)a_{2n-3,n-2} - 20a_{2n-3, n-3} - 120a_{2n-4, n-4} \\
	&\quad + (10n-30)a_{2n-4,n-3} - (32n + 8)a_{2n-3, n-3} \\
	&= (8n^2 - 5n -11)a_{2n-3, n-2} + 384 a_{2n-3, n-4}
	+ (10n- 30)a_{2n-4, n-3} \\
	&\quad - (32n + 28)a_{2n-3, n-3}. \tag{$\ast\ast\ast$}
\end{align*}
Since $2n-3 = 2(n-2) + 1$, by the
$a_{2k+1, k} \ge \frac{2}{2k+1}a_{2k, k-1}$ part
of the induction hypothesis,
we have
\begin{align*}
	(8n^2-5n-11)a_{2n-3,n-2} \ge
	\frac{2(8n^2-5n-11)}{2n-3}a_{2n-4, n-3}
	\ge (8n + 6) a_{2n-4, n-3}.
\end{align*}
Plugging the previous inequality into $(\ast\ast\ast)$ gives
\begin{align*}
	(\ast\ast\ast) &\ge (18n - 24)a_{2n-4, n-3}
	+ 384a_{2n-3, n-4} - (32n + 28)a_{2n-3,n-3}.
\end{align*}
Since $2n-3 \ge 2(n-4) \ge 3$,
when apply the recursion for $a_{2n-3, n-4}$, we can
drop the $a_{2n-5, n-6}$ term which is positive, to
get (after multiplying by $384/(2n-3)$),
\begin{align*}
	384a_{2n-3,n-4} &\ge \frac{384}{2n-3} [ 
	(n-3)a_{2n-4,n-4} + 10a_{2n-4,n-5} \\
	&\quad + (n^2-4n + 4)a_{2n-5, n-4} + (10n-44)a_{2n-5,n-5}
	].
\end{align*}
Apply the recursion for $a_{2n-3, n-3}$ directly
and multiply by $32n+28/(2n-3)$ to get
\begin{align*}
	(32n + 28)a_{2n-3,n-3}&= \frac{32n+28}{2n-3}
	[(n-2)a_{2n-4,n-3} + 6a_{2n-4,n-4} \\
	&\quad + (n^2-2n-1) a_{2n-5,n-3} + (2n-6)a_{2n-5,n-4}\\
	&\quad + 48a_{2n-5, n-5} ].
\end{align*}
Now, we will check that each of the terms in the expansion
for $(32n + 28)a_{2n-3, n-3}$ is less than one of the
terms in the expansion of $(18n-24)a_{2n-4,n-3}
+ 384a_{2n-3,n-4}$.

We have $(32n+28)/(2n-3) \le 17$, and we see that
\begin{align*}
	17\cdot (n-2)a_{2n-4,n-3} &\le (18n-24)a_{2n-4,n-3}, \\
	17\cdot 6a_{2n-4,n-4} &\le \frac{384(n-3)}{2n-3}a_{2n-4,n-4},\\
	17\cdot (2n-6)a_{2n-5,n-4} &\le \frac{68(n^2-4n+4)}{2n-3}a_{2n-5,n-4}, \\
	17\cdot 48a_{2n-5,n-5} &\le  \frac{384(10n-44)}{2n-3}a_{2n-5,n-5}.
\end{align*}
Now, it suffices to show 
\begin{align*}
	17\cdot(n^2 - 2n - 1)a_{2n-5,n-3}
	\le \frac{316(n^2-4n+4)}{2n-3}a_{2n-5,n-4}.
\end{align*}
It suffices to show that 
\[
	9a_{2n-5,n-4} \ge na_{2n-5,n-3}.
\]
By the recurrence relation we have
\begin{align*}
	na_{2n-5,n-3} &\le \frac{n}{(2n-5)}(2a_{2n-6,n-4} + 8a_{2n-7,n-5}) \\
	4.5 a_{2n-5,n-4} &\ge \frac{4.5(n-3)}{(2n-5)}a_{2n-6,n-4} \\
	4.5 a_{2n-5,n-4} &\ge \frac{4.5(2n-8)}{(2n-5)} a_{2n-7,n-5}.
\end{align*}
Combining these gives the desired inequality.
\end{proof}

\section{Proof of the $\gamma$-positivity of $J_{2n}(t)$}
\label{sec:Jn}
In this section we prove Theorem \ref{thm:Jn},
restated below.
Let the $\gamma$-expansion of $J_{2n}(t)$ be
\[
	J_{2n}(t) = \sum_{k=1}^n b_{2n,k} t^k(1+t)^{2n-2k}.
\]
We have the following recurrence relation for the
coefficients $b_{2n,k}$.

\begin{theorem}[{\cite[Theorem 4.4]{guo2006eulerian}}]
\label{thm:Jnrecur}
	For $n\ge 2$ and $k\ge 1$, we have
	\begin{align*}
		2nb_{2n,k} &= [k(k+1) + 2n-2]b_{2n-2,k} + 
		[2+2(k-1)(4n-4k-3)]b_{2n-2,k-1} \\
		&\quad + 8(n-k+1)(2n-2k+1)b_{2n-2,k-2},
	\end{align*}
	where $b_{2n,k} = 0$ if $k < 1$ or $k > n$.
\end{theorem}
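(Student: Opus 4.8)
The plan is to deduce this recurrence for the $\gamma$-coefficients $b_{2n,k}$ from a recurrence for the polynomials $J_{2n}(t)$ themselves. As a first step I would establish such a polynomial recurrence, of the form
\[
	2n\,J_{2n}(t)=\mathsf A_n(t)\,J_{2n-2}(t)+\mathsf B_n(t)\,J_{2n-2}'(t)+\mathsf C_n(t)\,J_{2n-2}''(t),
\]
with $\mathsf A_n,\mathsf B_n,\mathsf C_n\in\mathbb R[t]$ of bounded degree and coefficients affine-linear in $n$ (such recurrences, for both $I_n(t)$ and $J_{2n}(t)$, go back to Strehl's and Guo--Zeng's analyses). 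I would derive it combinatorially from the bijection $\J_{2n}\leftrightarrow\{1,\dots,2n-1\}\times\J_{2n-2}$ that removes from $\pi$ the transposition $\{i,2n\}$ containing the largest letter (so that $i=\pi(2n)$) and order-isomorphically relabels, by classifying how $\des$ changes when the pair $\{i,2n\}$ is reinserted at position $i$ and repackaging the resulting sum over $i$ into the three terms above.

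Granting this, the main step is a change of variables. Palindromicity of $J_{2n}(t)$ of center $n$ (Strehl) is precisely the statement that $J_{2n}(t)/(1+t)^{2n}$ is invariant under $t\mapsto 1/t$, hence is a polynomial $\hat J_{2n}(z)$ in $z=t/(1+t)^2$; concretely $\hat J_{2n}(z)=\sum_k b_{2n,k}z^k$, so the coefficients of $\hat J_{2n}$ are exactly the $b_{2n,k}$. Dividing the displayed polynomial recurrence by $(1+t)^{2n}$ and using $dz/dt=(1-t)/(1+t)^3$, both sides become manifestly invariant under $t\mapsto 1/t$ (the left side is literally $2n\,\hat J_{2n}(z)$), so the identity descends to a differential relation
\[
	2n\,\hat J_{2n}(z)=\mathcal D_n\,\hat J_{2n-2}(z),
\]
where $\mathcal D_n$ is an explicit operator of degree $\le 2$ in $z$ and of degree $\le 2$ in the Euler operator $\theta=z\,d/dz$, with coefficients affine-linear in $n$; no factors of $\sqrt{1-4z}=(1-t)/(1+t)$ survive, because both sides are genuine polynomials in $z$. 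Writing $\mathcal D_n$ in normal-ordered form $\sum_{j,\ell\le 2}c_{j\ell}(n)\,z^j\theta^\ell$ and using $[z^k]\,z^j\theta^\ell\hat J_{2n-2}(z)=(k-j)^\ell b_{2n-2,k-j}$, one extracts from the coefficient of $z^k$ a three-term recurrence relating $b_{2n-2,k}$, $b_{2n-2,k-1}$, $b_{2n-2,k-2}$ with coefficients quadratic in $k$ and linear in $n$; a routine comparison matches them with $k(k+1)+2n-2$, $2+2(k-1)(4n-4k-3)$, and $8(n-k+1)(2n-2k+1)$, and one checks that the spurious $z^{n+1}$-coefficient on the right vanishes and that $k=1$ behaves correctly, so that the conventions $b_{2n,k}=0$ for $k<1$ or $k>n$ are respected. (Equivalently one can substitute the $\gamma$-expansion of $J_{2n-2}$ directly into the polynomial recurrence and read off the coefficient of $t^k(1+t)^{2n-2k}$, but only after grouping terms so the powers of $(1+t)$ align; the $z$-substitution is the systematic device that effects this grouping.)

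I expect the bottleneck to be the first step — establishing the polynomial recurrence for $J_{2n}(t)$ with exactly the right coefficients. The reinsertion analysis splits into several cases according to the position $i$ relative to its new neighbours and according to boundary effects, and the delicate part is assembling these cases into clean $J_{2n-2}'$ and $J_{2n-2}''$ terms rather than an unwieldy position-by-position sum, together with the normalization that produces the factor $2n$. Once that identity is available and written so as to be compatible with the $\gamma$-basis, the remaining passage to the claimed recurrence for $b_{2n,k}$ is purely mechanical.
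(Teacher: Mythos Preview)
The paper does not prove this statement: it is quoted verbatim as \cite[Theorem~4.4]{guo2006eulerian} and used as a black box in the proof of Theorem~\ref{thm:Jn}. So there is no ``paper's own proof'' to compare against.

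That said, your outline is essentially the strategy Guo and Zeng use in \cite{guo2006eulerian}: first obtain a second-order differential recurrence for $J_{2n}(t)$ in terms of $J_{2n-2}(t)$ (they derive it from a linear recurrence on the ordinary coefficients $[t^k]J_{2n}(t)$, which in turn comes from the ``remove the transposition containing $2n$'' bijection you describe), and then substitute the $\gamma$-expansion and equate coefficients in the basis $t^k(1+t)^{2n-2k}$. Your repackaging via $z=t/(1+t)^2$ is a clean way to carry out that last step, equivalent to their direct substitution. The part you flag as the bottleneck --- getting the exact polynomial recurrence with the correct coefficients from the case analysis of reinsertion --- is indeed where all the work lies, and your sketch does not yet pin down $\mathsf A_n,\mathsf B_n,\mathsf C_n$; but once those are in hand the passage to the $b_{2n,k}$ recurrence is, as you say, mechanical.
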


\Jn*
\begin{proof}
	We will prove by induction on $n$
	the slightly stronger claim that
	for $b_{2n,k} \ge 0$ for $n \ge 9, k \ge 1$,
	and $b_{2n, n} \ge b_{2n-2,n-1}$ for
	$n \ge 11$.
	Assume the claim is true whenever the first index
	is less than $m$. We want to prove the claim for
	all $b_{m, k}$. If $m \le 2000$, we can
	check the claim directly (this has been checked
	using Sage). Thus, we
	may assume $m > 2000$. If $m > 2k$, then all of the
	coefficients in the recursion are nonnegative,
	so we are done by induction. Thus, we can assume
	that $(m,k) = (2n, n)$ with $n > 1000$.

	By the recurrence relation, we have
	\begin{align*}
		2nb_{2n,n} = 8b_{2n-2,n-2} - (6n-8)b_{2n-2,n-1}.
	\end{align*}
	We want to show that 
	$8b_{2n-2,n-2} - (8n-8)b_{2n-2,n-1} \ge 0$.
	We have
	\begin{align*}
		8b_{2n-2,n-2} - &(8n-8)b_{2n-2,n-1} \\
		&= 8b_{2n-2,n-2} - 32b_{2n-4, n-3}
		 + 4(6n-14)b_{2n-4, n-2}.
	\end{align*}
	Multiplying by $(2n-2)/8$, it suffices to show
	\[
		(2n-2) b_{2n-2,n-2} - (8n-8) b_{2n-4,n-3} + (6n^2 - 20n + 14)b_{2n-4,n-2} \ge 0.
	\]
	By expanding $(2n-2)b_{2n-2, n-2}$ using the recursion,
	we find that the above is equivalent to
	\begin{align*}
		&(7n^2 - 21n + 12)b_{2n-4, n-2}
		+ 48b_{2n-4, n-4} - (6n-4)b_{2n-4,n-3}
		 \ge 0. \tag{$\ddagger$}
	\end{align*}
	By the induction hypothesis,
	\[
		(7n^2-21n+12)b_{2n-4,n-2} \ge (7n^2-21n+12)b_{2n-6,n-3}. 
	\]
	By the recurrence in Theorem \ref{thm:Jnrecur},
		\begin{align*}
		(2n-4)_{2n-4,n-4} &\ge
		 (n^2 - 5n + 6)b_{2n-6, n-4}
		+ (10n - 48)b_{2n-6,n-5}.
	\end{align*}
	Multiplying by $48/(2n-4)$ yields
	\begin{align*}
		48b_{2n-4,n-4} &\ge
		\frac{48}{2n-4}[ (n^2 - 5n + 6)b_{2n-6, n-4}
		+ (10n - 48)b_{2n-6,n-5} ].
	\end{align*}
	Also, multiplying the recurrence for $b_{2n-4, n-3}$
	by $(6n-4)/(2n-4)$ yields
	\begin{align*}
		(6n-4)b_{2n-4,n-3} &= \frac{6n-4}{2n-4}
		[(n^2 -3n)b_{2n-6,n-3}
		+ (2n - 6)b_{2n-6,n-4} \\
		&\qquad\qquad\qquad+ 48b_{2n-6,n-5} ].
	\end{align*}
	We check that each term in this sum is less that
	one of the terms in the expansion of
	$(7n^2-21n+12)b_{2n-6,n-3} + 48b_{2n-4,n-4}$. 
	We have $(6n-4)/(2n-4) \le 4$ and
	\begin{align*}
		4(n^2-3n)b_{2n-6,n-3} &\le (7n^2-21n+12)b_{2n-6,n-3} \\
		4(2n-6)b_{2n-6,n-4} &\le \frac{48(n^2-5n+6)}{2n-4}b_{2n-6,n-4} \\
		4\cdot 48 b_{2n-6,n-5} &\le
		\frac{48(10n-48)}{2n-4}b_{2n-6,n-5}.
	\end{align*}
	Thus $(\ddagger)$ is true, as desired.
\end{proof}

\section{$(3412, 3421)$-avoiding permutations and
separable permutations}
\label{sec:dddes}

In this section we prove Theorem \ref{thm:dddes}, restated below.

\dddes*
For convenience we define the following variants of the
double descent set.
Let
\[
	\DD_0(\pi) = \{ i \in [n] : \pi(i-1) > \pi(i) > \pi(i+1)\}
\]
where $\pi(0) = 0$, $\pi(n+1) = \infty$, and
\[
	\DD_\infty(\pi) = \{ i \in [n] : \pi(i-1) > \pi(i) > \pi(i+1)\}
\]
where $\pi(0) = \infty$, $\pi(n+1) = 0$.
Similarly define $\dd_0(\pi)$ and $\dd_\infty(\pi)$.
Finally, let
\[
	\des'(\pi) = \#(\DES(\pi) \setminus \{n-1\}),
	\quad \dd'(\pi) = \#(\DD(\pi) \setminus \{n - 1\}).
\]
Let $\S_n^1 = \S_n(2413, 3142)$ and $\S_n^2 = \S_n(3412, 3421)$.
For $i = 1, 2$, define
\begin{align*}
	S_i(x,y,z) &= \sum_{n \ge 1} \sum_{\pi \in \S_n^i} 
	x^{\des(\pi)}y^{\dd(\pi)}z^n.
\end{align*}
Moreover, define
\begin{align*}
	F_1(x,y,z) &= \sum_{n \ge 1} \sum_{\pi\in \S_n^1} 
	x^{\des(\pi)}y^{\dd_0(\pi)}z^n \\
	R_1(x,y,z) &= \sum_{n \ge 1} \sum_{\pi \in \S_n^1} 
	x^{\des(\pi)}y^{\dd_\infty(\pi)}z^n \\
	T_2(x,y,z) &= \sum_{n \ge 1} \sum_{\pi \in \S_n^2}
	x^{\des'(\pi)} y^{\dd'(\pi)}.
\end{align*}
We will also use $S_i$, $F_1$, etc. to denote
$S_i(x, y,z)$, $F_1(x, y, z)$, etc.

The proof of the following lemma is very similar to
the proof of \cite[Lemma 3.4]{lin2017gamma}, so it is omitted. The essence of the proof is
Stankova's block decomposition \cite{stankova1994forbidden}.

\begin{lemma}
	We have the system of equations
	\begin{align*}
		S_1 &= z + (z + xyz)S_1 + \frac{2xzS_1^2}{1 - xR_1F_1} + 
		\frac{xzS_1^2(F_1 + xR_1)}{1 - xR_1F_1}, \\
		F_1 &= z + (xzS_1 + zF_1) + \frac{2xzF_1S_1}{1 - xR_1F_1} + 
		\frac{xzF_1S_1(F_1 + xR_1)}{1-xR_1F_1}, \\
		R_1 &= yz + zS_1 + xyzR_1 + \frac{2xzR_1S_1}{1-xR_1F_1} 
		+ \frac{xzR_1S_1(F_1 + xR_1)}{1 - xR_1F_1}.
	\end{align*}
\end{lemma}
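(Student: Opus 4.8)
The plan is to establish the three identities together from a single recursive decomposition of the $(2413,3142)$-avoiding permutations, organized according to the position of the largest letter; this is the combinatorial mechanism of Stankova's block decomposition \cite{stankova1994forbidden}. Three coupled series are forced on us because, when a separable block $\beta$ is glued inside a larger separable permutation, the letters immediately to the left of its first position and to the right of its last position are ambient letters that are either larger or smaller than everything in $\beta$; consequently whether the first or the last position of $\beta$ is a double descent depends on these ``phantom boundaries,'' each of which is effectively $+\infty$ (a larger ambient letter, or a genuine boundary) or $0$ (a smaller ambient letter). The series $S_1$, $F_1$, $R_1$ are exactly the weighted enumerators of separable permutations under the three phantom‑boundary regimes that occur, namely $(+\infty,+\infty)$, $(0,+\infty)$, $(+\infty,0)$ — equivalently they track $\dd$, $\dd_0$, $\dd_\infty$ — and the coupling records the fact that inserting a block changes the regime its neighbours see.

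First I would handle the base case and the two cases in which the maximum is at an endpoint. The permutation of size $1$ contributes $z$ to $S_1$ and to $F_1$ (its single letter is not a double descent when the left phantom is $0$ or when both phantoms are $+\infty$) and $yz$ to $R_1$ (it is one, since $+\infty>1>0$). If $\pi\in\S_n^1$ has its maximum in the last position, removing it is a bijection onto $\S_{n-1}^1$: no descent is lost, the removed letter (whose right neighbour is the genuine boundary) is never a double descent, and the truncated permutation keeps whichever left phantom it had while its right phantom becomes the removed maximum, i.e.\ $+\infty$; this gives the contributions $zS_1$, $zF_1$, $zS_1$. If the maximum is in the first position, removing it is again a bijection onto $\S_{n-1}^1$, now losing one descent and, exactly when the left phantom is $+\infty$ (the $S_1$‑ and $R_1$‑regimes), one double descent, while the truncated permutation acquires $+\infty$ as its left phantom; this gives $xyzS_1$, $xzS_1$, $xyzR_1$. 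Collecting these yields the heads $z+(z+xyz)S_1$, $z+(xzS_1+zF_1)$, $yz+zS_1+xyzR_1$.

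The substance is the case in which the maximum sits at an interior position, so $\pi=\pi^{(L)}\,n\,\pi^{(R)}$ with both flanks nonempty. There is always a descent immediately after $n$ and never a double descent at $n$ itself, which produces the common leading factor $xz$ of the remaining terms. Here I would invoke Stankova's block decomposition: joint avoidance of $2413$ and $3142$ forces $\pi$, read outwards from $n$, to be a zigzag of separable blocks, each glued to the part already built alternately by a skew sum or a direct sum, flanked by a leftmost and a rightmost block abutting the genuine boundaries of $\pi$. Each skew‑sum step creates exactly one new descent, which accounts for the powers of $x$ and for the geometric series $1/(1-xR_1F_1)$; the leftmost block sees the genuine left boundary on one side and a larger letter on the other, hence is counted by $S_1$ in the $S_1$‑ and $R_1$‑regimes and by $F_1$ in the $F_1$‑regime, the rightmost block symmetrically is counted by $S_1$ except by $R_1$ in the $R_1$‑regime, and each interior block of the zigzag sees a larger letter on one side and a smaller one on the other with the orientation alternating, so these are counted alternately by $R_1$ and by $F_1$; finally the two possible orientations of the zigzag together with the way it may terminate account for the factor $2+F_1+xR_1$. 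Putting these together, the interior‑maximum permutations contribute $xzS_1^2(2+F_1+xR_1)/(1-xR_1F_1)$ to $S_1$, and the analogous expressions — obtained by replacing the leftmost‑block factor by $F_1$, resp.\ the rightmost‑block factor by $R_1$ — to $F_1$ and $R_1$, which completes the three equations.

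The main obstacle is this third step: making Stankova's block decomposition precise and checking that it is weight‑preserving and bijective onto the interior‑maximum permutations with no correction terms. Concretely, one must show that avoidance of $2413$ and $3142$ forces exactly the alternating, nested zigzag interleaving of the two flanks' value sets (Stankova's analysis, carried out by exhibiting, whenever the value sets straddle incorrectly, one of the two forbidden patterns), determine which phantom‑boundary regime is attached to each block of the zigzag and hence which of $S_1$, $F_1$, $R_1$ enumerates it, and verify that descents and double descents are created only at the junctions and positions claimed. This is routine but delicate, and is carried out in essentially the same form as in \cite[Lemma 3.4]{lin2017gamma}; once the single decomposition is in place, the three equations drop out in parallel just by reading off the phantom boundaries at the two ends of $\pi$.
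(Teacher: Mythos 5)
Your proposal follows exactly the route the paper indicates: the paper omits the proof of this lemma, stating that it is "very similar to the proof of \cite[Lemma 3.4]{lin2017gamma}" with Stankova's block decomposition as its essence, and your sketch — splitting on the position of $n$, tracking the three phantom-boundary regimes via $S_1$, $F_1$, $R_1$, and reading off the alternating block structure to get the geometric series $1/(1-xR_1F_1)$ and the factor $2+F_1+xR_1$ — is precisely that argument, with the boundary cases correctly matching all the linear terms of the three equations. This is the same approach as the paper's (deferred) proof, and your account is correct.
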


Combining the first equation multiplied by $F_1$ and the second equation multiplied by $S_1$, and combining the first equation multiplied by $R_1$ and the third equation multiplied by $S_1$, respectively, gives us 
\[ F_1 = \frac{S_1+xS_1^2}{1+xyS_1}, \quad R_1 = \frac{yS_1 + S_1^2}{1 + S_1}. \]
Plugging these values into the first equation and expanding yields the following.
\begin{corollary}
	We have
	\begin{align*}
		S_1(x,y,z) = xS_1^3(x,y,z) + xzS_1^2(x,y,z) + (z + xyz)S_1(x,y,z) + z.
	\end{align*}
\end{corollary}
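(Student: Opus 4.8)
The plan is to stay entirely inside the functional-equation framework of the Lemma. We already have in hand the two closed forms $F_1 = \frac{S_1 + xS_1^2}{1+xyS_1}$ and $R_1 = \frac{yS_1 + S_1^2}{1+S_1}$, obtained from the system by the linear combinations recorded above (``first equation $\times\,F_1$'' against ``second $\times\,S_1$'', and ``first $\times\,R_1$'' against ``third $\times\,S_1$''; the point of those combinations is that they annihilate every term carrying the factor $(1-xR_1F_1)^{-1}$, leaving the displayed rational expressions). So it remains only to substitute these into the first equation of the Lemma. Writing that equation as $S_1 - z - (z+xyz)S_1 = xzS_1^2\,\frac{2+F_1+xR_1}{1-xR_1F_1}$, the whole computation reduces to evaluating the single ratio $\frac{2+F_1+xR_1}{1-xR_1F_1}$ after substitution.

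The ratio collapses in a way that keeps the algebra short, and this is the step I would emphasize. Putting $F_1$ and $R_1$ over the common denominator $(1+xyS_1)(1+S_1)$, one finds that both resulting numerators share the quadratic factor $1+(1+xy)S_1+xS_1^2$:
\[
	1 - xR_1F_1 = \frac{(1-xS_1^2)\bigl(1+(1+xy)S_1+xS_1^2\bigr)}{(1+xyS_1)(1+S_1)},
\]
\[
	2 + F_1 + xR_1 = \frac{\bigl(1+(1+xy)S_1+xS_1^2\bigr)\bigl(2+(1+xy)S_1\bigr)}{(1+xyS_1)(1+S_1)}.
\]
Cancelling the common quadratic factor and the common denominator, the ratio becomes simply $\frac{2+(1+xy)S_1}{1-xS_1^2}$, so the first equation turns into $S_1 - z - (z+xyz)S_1 = xzS_1^2\,\frac{2+(1+xy)S_1}{1-xS_1^2}$. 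Multiplying through by $1-xS_1^2$ and expanding the right-hand side, the two $S_1^3$ contributions cancel (their combined coefficient $-x(z+xyz)+xz(1+xy)$ is $0$), and collecting what remains gives exactly $S_1 = xS_1^3 + xzS_1^2 + (z+xyz)S_1 + z$, which is the assertion.

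The only part requiring care is verifying the two factorizations displayed above: each numerator is a cubic in $S_1$ with coefficients in $\ZZ[x,y]$, and the check is routine coefficient matching, using identities such as $x+x^2y=x(1+xy)$ and $1+2x+2xy+x^2y^2 = 2x+(1+xy)^2$. I expect this bookkeeping to be the main obstacle, though it is entirely mechanical. If one does not notice the common quadratic factor, a slightly longer but equally safe route is to clear all denominators in the first equation at once, which produces a degree-five polynomial identity in $S_1$; that identity factors as $\bigl(1+(1+xy)S_1+xS_1^2\bigr)\bigl(S_1 - xS_1^3 - xzS_1^2 - (z+xyz)S_1 - z\bigr) = 0$, and since $S_1$ has no constant term as a power series in $z$ the first factor is a unit in $\ZZ[x,y][[z]]$ and may be divided out, again producing the cubic. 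Either way there is no conceptual difficulty once the closed forms for $F_1$ and $R_1$ are available.
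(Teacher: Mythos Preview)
Your proposal is correct and follows exactly the approach the paper indicates: substitute the closed forms $F_1=\frac{S_1+xS_1^2}{1+xyS_1}$ and $R_1=\frac{yS_1+S_1^2}{1+S_1}$ into the first equation of the Lemma and simplify. The paper records this step in one line (``plugging these values into the first equation and expanding''), so your write-up simply supplies the details, including the convenient observation that the common factor $1+(1+xy)S_1+xS_1^2$ cancels in the ratio.
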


We will show that $S_2$ satisfies the same equation.

\begin{lemma}
\label{lem:S2eqns}
	We have the system of equations
	\begin{align*}
		S_2 &= z + zS_2 + (xy - x)zS_2 + xT_2S_2, \\
		T_2 &= z + (x - xy)z^2 + zS_2 + (xyz - 2xz + z)T_2 + xT_2^2.
	\end{align*}
\end{lemma}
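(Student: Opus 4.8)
The plan is to derive the two functional equations by a block decomposition of the $(3412,3421)$-avoiding permutations, tracking the statistics $\des$ and $\dd$ (respectively $\des'$ and $\dd'$ for the auxiliary series $T_2$). The key structural input is that if $\pi \in \S_n^2$ with $n \ge 2$, we look at the position of $n$, or more conveniently decompose $\pi$ according to its leftmost block in a Stankova-type block decomposition; avoiding $3412$ and $3421$ forces a rigid structure on how blocks to the left and right of a fixed large element can interleave. Concretely, I would write $\pi$ in the form (block)(block)$\cdots$ and argue that the presence of the patterns $3412$ and $3421$ prevents a block from having two later blocks both containing elements larger than its minimum, so the admissible "gluings" are: $\pi$ is a single point; $\pi = 1 \oplus \pi'$ (prepend a new smallest element, or its skew-sum analogue); or $\pi$ decomposes as a small initial segment followed by a $T_2$-type tail attached to the rest. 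Each admissible gluing contributes one term in the equation for $S_2$.

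Next I would track the statistics through each gluing. Prepending or appending a point either creates, destroys, or leaves fixed a descent and a double descent at the seam, which accounts for the coefficients $z$, $zS_2$, and $(xy-x)zS_2$ in the first equation: the $(xy-x)$ factor records the case where the new point turns an existing ascent-top into a double descent (gaining a $y$) versus the case it merely shifts things (the $-x$ correcting an overcount of an ordinary descent). The term $xT_2S_2$ records gluing a $T_2$-piece — which by definition ignores the descent and double descent at its own right end, precisely because that position gets absorbed into the junction with the following $S_2$-piece, contributing exactly one extra $x$. The second equation for $T_2$ is obtained the same way but with $\des'$, $\dd'$ in place of $\des$, $\dd$: the extra terms $(x-xy)z^2$ and the modified linear coefficient $(xyz - 2xz + z)$ and the quadratic term $xT_2^2$ come from the same block analysis, now being careful that the ignored last position interacts differently with a trailing point versus a trailing $T_2$-block.

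The main obstacle, and where I would spend the most care, is the bookkeeping of the double-descent statistic at the block boundaries — especially getting the signs right in the coefficients $(xy-x)$ and $(xyz-2xz+z)$, which encode inclusion–exclusion corrections for whether a boundary position is simultaneously a descent bottom of one block and a descent top of the next. I would handle this by fixing, once and for all, a canonical block decomposition (say, the finest one into sum/skew-sum indecomposable pieces, read left to right) and then checking each of the finitely many local configurations at a seam against the definitions of $\DD$, $\DD'$, $\des$, and $\des'$, using the boundary conventions $\pi(0)=\infty$ (resp.\ $0$) stated above. Once the combinatorial decomposition and the seam analysis are pinned down, assembling the generating function identities is routine, and — by symmetry with the argument sketched for $\S_n^1$ via Stankova's decomposition — can be presented compactly.
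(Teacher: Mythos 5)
Your overall strategy (decompose the class, track the statistics across the seams, translate into algebraic equations for $S_2$ and $T_2$) is the same as the paper's, and you even mention the decomposition by the position of $n$ in passing. But the concrete decomposition you commit to is not pinned down, and the version you sketch would not work as stated. The paper's proof rests on one precise structural fact: if $\pi = A\,n\,B$ with $B$ nonempty, then avoidance of $3412$ and $3421$ forces every element of $B$ \emph{except the one at the position of $B$'s minimum} to exceed every element of $A$; consequently $\pi$ is recovered uniquely from $\pi_1 \in \S_k^2$ (the pattern of $A$ followed by that one small entry) and $\pi_2 \in \S_{n-k}^2$ (the pattern of $B$), with the last letter of $\pi_1$ reinserted into $\pi_2$ at the position of its minimum. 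Your candidate structure (``a block cannot have two later blocks both containing elements larger than its minimum,'' with gluings ``single point, $1 \oplus \pi'$, or initial segment plus $T_2$-tail'') is not this statement and is not a correct characterization of the class: the forbidden patterns concern two elements \emph{smaller} than an earlier ascent bottom occurring after the ascent top, and $\S_n(3412,3421)$ is not closed under skew sums (e.g.\ $3412$ itself is a skew sum of two increasing blocks), so the finest sum/skew-sum decomposition you propose to fix ``once and for all'' does not stay inside the class and cannot be the organizing principle.

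The second gap is that the seam bookkeeping --- which you yourself identify as the crux --- is deferred rather than carried out, and the heuristics you offer for the correction terms point at the wrong cases. The term $(xy-x)zS_2$ is not an inclusion--exclusion over ``ascent-tops turning into double descents''; it is the correction for the single exceptional case $\pi_1 = 1$ of the $\ast$-decomposition, i.e.\ $\pi = nB$, where position $1$ becomes a double descent only because of the boundary convention $\pi(0)=\infty$, so the generic contribution $xzS_2$ must be replaced by $xyzS_2$. Likewise $(x-xy)z^2$ corrects the doubly exceptional case $\pi_1=\pi_2=1$, and $(xyz-2xz+z)T_2$ assembles the two single exceptions $\pi_1=1$ (a gained double descent in $\dd'$) and $\pi_2=1$ (the descent created by $n$ lands at position $n-1$ and is discarded by $\des'$). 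Without the correct decomposition and without these exceptional cases identified explicitly, the stated equations cannot actually be derived from your outline, so the proposal as written has a genuine gap.
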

\begin{proof}
	By considering the position of $n$, we see
	that every permutation $\pi \in \S_n^2$
	can be uniquely written as either
	$\pi_1 n$ where $\pi_1 \in \S_{n-1}^2$ or	
	$\pi_1 \ast \pi_2$ where $\pi_1\in \S_{k}^2$,
	$\pi_2 \in \S_{n-k}^2$, $1 \le k \le n - 1$, and
	$\pi_1 \ast \pi_2 = AnB$ where
	\begin{align*}
		A&= \pi_1(1) \cdots
		\pi_1(k-1) \\
		B&= (\pi_2(1) + \ell)
		\cdots (\pi_2(j-1) + \ell) \pi_1(k) (\pi_2(j+1) + \ell)
		\cdots (\pi_2(n-k) + \ell),
	\end{align*}
	where $\pi_2(j) = 1$ and $\ell = k - 1$.
	Furthermore,
	\begin{equation*}
	\begin{aligned}[c]
		\des(\pi_1 n) &= \des(\pi_1) \\
		\dd(\pi_1 n) &= \dd(\pi_1) \\
		\des'(\pi_1 n) &= \des(\pi_1)\\
		\dd'(\pi_1 n) &= \dd(\pi_1)
	\end{aligned}
	\hspace{1cm}
	\begin{aligned}
		\des(\pi_1 \ast \pi_2) &= \des'(\pi_1) + \des(\pi_2) + 1 \\
		\dd(\pi_1 \ast \pi_2) &= \dd'(\pi_1) + \dd(\pi_2) \\
		\des'(\pi_1 \ast \pi_2) &= \des'(\pi_1) + \des'(\pi_2) + 1\\
		\dd'(\pi_1 \ast \pi_2) &= \dd'(\pi_1) + \dd'(\pi_2),
	\end{aligned}
	\end{equation*}
	with the exceptions $\dd(1 \ast \pi_2)
	= \dd(\pi_2) + 1$, $\des'(\pi_1 \ast 1) = \des'(\pi_1)$, $\dd'(1 \ast \pi_2) = \dd'(\pi_2) + 1$, and $\des'(1 \ast \pi_2) = \des'(\pi_2)$
	if $n \le 2$.
	With the initial conditions
	\[
		S_2(x, y, z) = z + \cdots,
		\quad T_2(x, y, z) = z + 2z^2 + \cdots,
	\]
	the above implies the stated equations. 
\end{proof}

\begin{proof}[Proof of Theorem \ref{thm:dddes}]
Solving the equations in Lemma \ref{lem:S2eqns} shows
that $S_2$ satisfies the same equation as
$S_1$.
\end{proof}

\section{Concluding remarks and open problems}

Our proofs of the $\gamma$-positivity of $I_n(t)$ and
$J_{2n}(t)$ are purely computational.
Guo and Zeng first suggested the following question.

\begin{prob}[Guo--Zeng \cite{guo2006eulerian}]
	Give a combinatorial interpretation of the 
	coefficients $a_{n,k}$.
\end{prob}

Dilks \cite{dilks2014q} conjectured the following
$q$-analog of the $\gamma$-positivity of 
$I_n(t)$. Here $\maj(\pi)$ denotes the major index
of $\pi$, which is the sum of the descents of $\pi$.

\begin{conjecture}[Dilks \cite{dilks2014q}]
	For $n \ge 1$,
	\[
		\sum_{\pi \in \I_n} t^{\des(\pi)}
		q^{\maj(\pi)} = \sum_{k = 0}^{\lfloor \frac{n-1}{2}\rfloor} \gamma_{n,k}^{(I)} t^k q^{\binom{k+1}{2}} \prod_{i = k + 1}^{n - 1 - k} (1 + tq^i),
	\]
	where $\gamma_{n,k}^{(I)}(q) \in \NN[q]$.
\end{conjecture}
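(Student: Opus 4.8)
The plan is to extend the strategy of Section~\ref{sec:In}: produce a $q$-analog of the Guo--Zeng recurrence of Theorem~\ref{thm:Inrecur} for the candidate coefficients $\gamma_{n,k}^{(I)}(q)$, and then prove by induction on $n$ that they lie in $\NN[q]$. Write $I_n(t,q) = \sum_{\pi \in \I_n} t^{\des(\pi)} q^{\maj(\pi)}$ and $B_{n,k}(t,q) = t^k q^{\binom{k+1}{2}} \prod_{i=k+1}^{n-1-k}(1+tq^i)$. The matrix expressing $\{B_{n,k}\}_{0 \le k \le \lfloor (n-1)/2 \rfloor}$ in the basis $\{t^j\}$ is triangular with diagonal entries $q^{\binom{k+1}{2}}$, so the $\gamma_{n,k}^{(I)}(q)$ are well-defined elements of $\ZZ[q,q^{-1}]$; a short divisibility check (using that $[t^j]\,I_n(t,q)$ is divisible by enough powers of $q$) shows they in fact lie in $\ZZ[q]$, and specializing $q=1$ in the conjectured identity gives back Theorem~\ref{thm:In}, so $\gamma_{n,k}^{(I)}(1) = a_{n,k}$. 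Hence all the content is a clean recurrence plus coefficientwise nonnegativity.

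First I would derive a $q$-recurrence for $I_n(t,q)$ itself. Classify $\pi \in \I_n$ by the orbit of $n$: either $n$ is a fixed point and erasing it gives $\pi' \in \I_{n-1}$, or $n$ lies in a transposition $(j,n)$ and erasing the letters $n$ and $j$ and standardizing gives $\pi'' \in \I_{n-2}$. Tracking how $(\des,\maj)$ changes under the reinsertions is what produces the $q=1$ recurrence, but now $\maj$ shifts by amounts depending on where the deleted letters sit, so one must refine $I_n(t,q)$ into a small family of generating functions carrying a boundary statistic (for instance $\des'$ together with the indicator of a descent at position $n-1$, in the spirit of the $\des'$, $\dd'$ refinements of Section~\ref{sec:dddes}), set up a linear system, and eliminate; alternatively one can extract the recurrence from the $q$-exponential generating function of involutions, as Guo and Zeng presumably did at $q=1$. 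Next, one records the $q$-Pascal relations among the $B_{n,k}$ — relating $B_{n,k}$ to $B_{n-1,k}$ and $B_{n-1,k-1}$ with $q$-power coefficients, together with the inverse relations expressing each $B_{n-1,k}$ as a $\ZZ[q]$-combination of $B_{n,k}$ and $B_{n,k+1}$, all checkable directly from the product formula and with slightly simpler shapes on the diagonals $n = 2k+1, 2k+2$. Substituting these into the $I_n$-recurrence and comparing coefficients of $B_{m,k}$ (legitimate by triangularity) yields a recurrence for $\gamma_{m,k}^{(I)}(q)$ through the $\gamma$'s of levels $m-1$ and $m-2$, with $\ZZ[q]$ coefficients, each specializing at $q=1$ to its counterpart in Theorem~\ref{thm:Inrecur}. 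For $m \ge 2k+3$ every coefficient should be manifestly in $\NN[q]$ — each positive $q=1$ coefficient deforms to a genuine $q$-analog such as $[j]_q$ or $q^a[b]_q$ — so the induction closes off the diagonal, and the finite initial range is still a Sage computation.

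The crux is the near-diagonal $(m,k) = (2k+1,k)$ and $(2k+2,k)$, exactly as in Section~\ref{sec:In}: some recurrence coefficients are then negative and one must telescope one or two steps further and show the positive contributions dominate, now \emph{coefficientwise} over $\NN[q]$. I would strengthen the inductive hypothesis to a coefficientwise domination — something like $\gamma_{2k+1,k}^{(I)}(q) \succeq q^{c(k)}\gamma_{2k,k-1}^{(I)}(q)$ for $k$ large, with $\succeq$ meaning the difference lies in $\NN[q]$ and $c(k)$ read off from the $q$-recurrence — and verify a finite base range in Sage. The obstacle is that the $q=1$ proof divides freely (steps like ``multiply by $48/(2n-2)$ and drop terms''), which is illegal over $\NN[q]$; every such move must be replaced by an honest coefficientwise inequality between $q$-analogs — of the form $[a]_q\,p(q) \succeq [b]_q\,p(q)$ when $a \ge b$ and $p \in \NN[q]$, or $q^{\alpha}[a]_q \succeq q^{\beta}[b]_q$ under explicit inequalities on $(\alpha,a,\beta,b)$ — and these must be chained so the powers of $q$ match exactly. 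Compiling the precise list of these elementary $q$-lemmas and checking that the domination chain of Section~\ref{sec:In} lifts once they are in hand is where essentially all the work lies; I expect no conceptual obstruction beyond this bookkeeping.

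A second route that might bypass the recurrence uses RSK: for $\pi \in \I_n$ with common insertion/recording tableau $P$ of shape $\lambda \vdash n$, Sch\"utzenberger's theorem gives $\DES(\pi) = \DES(P)$, hence $I_n(t,q) = \sum_{\lambda \vdash n} \sum_{T \in \mathrm{SYT}(\lambda)} t^{\des(T)} q^{\maj(T)}$. Feeding in Stanley's formula for the principal specialization of the Schur function $s_\lambda$ and the Littlewood identity $\sum_\lambda s_\lambda = \prod_i (1-x_i)^{-1}\prod_{i<j}(1-x_i x_j)^{-1}$ turns the desired $q$-$\gamma$-expansion into a symmetric-function identity; the hard part there is recognizing the $\gamma$-basis inside that identity and proving its coefficients lie in $\NN[q]$, which is not obviously easier than the above, but would have the bonus of supplying the combinatorial interpretation of the $\gamma$ coefficients sought by Guo and Zeng.
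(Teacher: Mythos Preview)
The paper does not prove this statement: it is Dilks' conjecture, recorded verbatim in Section~5 among the open problems, with no argument offered. So there is no ``paper's own proof'' to compare against, and what you have written is not a proof either but a programme.

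That programme has a genuine gap at exactly the point you flag. You never produce the $q$-recurrence for the $\gamma_{n,k}^{(I)}(q)$; you only assert that one should exist by ``tracking how $(\des,\maj)$ changes under reinsertion'' or by ``extracting it from a $q$-exponential generating function,'' and then further assert that for $m\ge 2k+3$ its coefficients will be ``manifestly in $\NN[q]$.'' Neither step is routine. Reinserting a $2$-cycle $(j,n)$ into an involution changes $\maj$ by an amount that depends on the local descent pattern at both positions, so the analogue of D\'esarm\'enien's recurrence for $I_n(t)$ splinters into several cases whose $q$-weights do not obviously recombine into $\NN[q]$ coefficients after the change of basis to the $B_{n,k}$. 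Until that recurrence is actually written down, the off-diagonal case is not ``closed'' at all.

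The near-diagonal case is worse. The argument of Section~\ref{sec:In} is a chain of numerical inequalities that repeatedly divides by integers and compares rational multiples: steps like ``multiply by $48/(2n-2)$,'' ``$(32n+28)/(2n-3)\le 17$,'' or ``drop all terms but one and bound the ratio.'' You correctly note that division is illegal over $\NN[q]$ and propose to replace each step by a coefficientwise domination such as $[a]_q\,p(q)\succeq[b]_q\,p(q)$ for $a\ge b$. But the paper's inequalities are not of that shape: they compare \emph{products} like $17\cdot(n-2)\,a_{2n-4,n-3}$ against $(18n-24)\,a_{2n-4,n-3}$, and a hypothetical $q$-lift would require something like $q^{\alpha}[17]_q[n-2]_q \preceq q^{\beta}[18n-24]_q$ coefficientwise, which is simply false in general (the supports need not even match). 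Getting the powers of $q$ to align simultaneously across the dozen or so chained comparisons of Section~\ref{sec:In} is the entire problem, not ``bookkeeping''; there is no reason to expect the specific slack exploited at $q=1$ to survive coefficientwise. Your RSK/symmetric-function alternative is more promising as a source of structure, but as you admit it does not yet isolate the $\gamma$-basis either. As written, this is a reasonable research plan, not a proof.
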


Since the $(3412, 3421)$-avoiding permutations
are invariant under the MFS action, it would be
interesting to find a combinatorial proof of
Theorem \ref{thm:dddes}, since this would
lead to a group action on $\S_n(2413, 3142)$
such that each orbit contains exactly one element
of 
\[
	\{ \pi \in \S_2(2413, 3142) : \dd(\pi) = 0, \des(\pi) = k \}
\]
(cf. \cite[Remark 3.9]{fu2018two}).

\begin{prob}
	Give a bijection between $(3412, 3421)$-avoiding
	permutations with $m$ double descents and $k$
	descents and separable permutations with
	$m$ double descents and $k$ descents.
\end{prob}

Note that there does not exist a bijection preserving
descent sets because the separable permutations
are not $\DES$-Wilf equivalent to any permutation
classes avoiding two patterns.
% Finally, in \cite{lin2016proof} Lin proved a conjecture
% of Gessel \cite{branden2008actions} stating that
% \[
% 	\sum_{\pi \in \S_n}
% \]

Finally, Lin \cite{lin2017gamma} proved that the only
permutations $\sigma$ of length $4$ which satisfy
\[
	\sum_{\pi \in \S_n(\sigma, \sigma^r)} t^{\des(\pi)}
	= \sum_{k = 0}^{\lfloor\frac{n-1}{2}\rfloor}
	\gamma_{n,k}t^k(1 + t)^{n-1-2k}
\]
where
\[
	\gamma_{n,k} = \#\{ \pi \in \S_n(\sigma, \sigma^r)
	: \dd(\pi) = 0, \des(\pi) = k\}
\]
are the permutations $\sigma = 2413$, $3142$, $1342$, $2431$.
Here $\sigma^r$ denotes the reverse of $\sigma$.
We can similarly ask the following.

\begin{prob}
\label{prob:reverse}
	Which permutations $\sigma$ of length $\ell \ge 6$
	satisfy the above property?
\end{prob}

\begin{remark}
For $\ell = 5$, the answer to Problem \ref{prob:reverse} is $\sigma = 13254$, $15243$, $15342$, $23154$, $25143$ and their reverses. We have verified using Sage that these are the only permutations which satisfy the property for $n = 5, 6, 7$, and these permutation classes are all invariant under the MFS action because in these patterns, every index $i \in [5]$ is either a valley or a peak.
\end{remark}

\section{Acknowledgements}
This research was conducted at the University of Minnesota
Duluth REU and was supported by NSF / DMS grant 1650947 and 
NSA grant H98230-18-1-0010. I would like to thank
Joe Gallian for suggesting the problem, and Brice Huang
for many careful comments on the paper.

\bibliography{duluth}{}

\begin{thebibliography}{10}

\bibitem{branden2008actions}
{\sc Br{\"a}nd{\'e}n, P.}
\newblock Actions on permutations and unimodality of descent polynomials.
\newblock {\em European Journal of Combinatorics 29}, 2 (2008), 514--531.

\bibitem{branden2015unimodality}
{\sc Br{\"a}nd{\'e}n, P.}
\newblock Unimodality, log-concavity, real-rootedness and beyond.
\newblock {\em Handbook of {E}numerative {C}ombinatorics 87\/} (2015), 437.

\bibitem{dilks2014q}
{\sc Dilks, K.}
\newblock q-gamma nonnegativity.
\newblock {\em preprint\/} (2014).

\bibitem{foata2006theorie}
{\sc Foata, D., and Sch{\"u}tzenberger, M.-P.}
\newblock {\em Th{\'e}orie g{\'e}om{\'e}trique des polyn{\^o}mes
  eul{\'e}riens}, vol.~138.
\newblock Springer, 2006.

\bibitem{fu2018two}
{\sc Fu, S., Lin, Z., and Zeng, J.}
\newblock On two unimodal descent polynomials.
\newblock {\em Discrete Mathematics 341}, 9 (2018), 2616--2626.

\bibitem{guo2006eulerian}
{\sc Guo, V.~J., and Zeng, J.}
\newblock The {E}ulerian distribution on involutions is indeed unimodal.
\newblock {\em Journal of Combinatorial Theory, Series A 113}, 6 (2006),
  1061--1071.

\bibitem{kitaev2011patterns}
{\sc Kitaev, S.}
\newblock {\em Patterns in permutations and words}.
\newblock Springer Science \& Business Media, 2011.

\bibitem{lin2016proof}
{\sc Lin, Z.}
\newblock Proof of {G}essel's $\gamma$-positivity conjecture.
\newblock {\em The Electronic Journal of Combinatorics 23}, 3 (2016), 3--15.

\bibitem{lin2017gamma}
{\sc Lin, Z.}
\newblock On $\gamma$-positive polynomials arising in pattern avoidance.
\newblock {\em Advances in Applied Mathematics 82\/} (2017), 1--22.

\bibitem{lin2018sextuple}
{\sc Lin, Z., and Kim, D.}
\newblock A sextuple equidistribution arising in pattern avoidance.
\newblock {\em Journal of Combinatorial Theory, Series A 155\/} (2018),
  267--286.

\bibitem{lin2015gamma}
{\sc Lin, Z., and Zeng, J.}
\newblock The $\gamma$-positivity of basic {E}ulerian polynomials via group
  actions.
\newblock {\em Journal of Combinatorial Theory, Series A 135\/} (2015),
  112--129.

\bibitem{postnikov2008faces}
{\sc Postnikov, A., Reiner, V., and Williams, L.}
\newblock Faces of generalized permutohedra.
\newblock {\em Doc. Math 13}, 207-273 (2008), 51.

\bibitem{shin2012symmetric}
{\sc Shin, H., and Zeng, J.}
\newblock The symmetric and unimodal expansion of {E}ulerian polynomials via
  continued fractions.
\newblock {\em European Journal of Combinatorics 33}, 2 (2012), 111--127.

\bibitem{stankova1994forbidden}
{\sc Stankova, Z.~E.}
\newblock Forbidden subsequences.
\newblock {\em Discrete Mathematics 132}, 1-3 (1994), 291--316.

\bibitem{strehl1981symmetric}
{\sc Strehl, V.}
\newblock Symmetric {E}ulerian distributions for involutions.
\newblock {\em S{\'e}minaire Lotharingien Combinatoire 1\/} (1981).

\end{thebibliography}
\bibliographystyle{acm}
\end{document}